\newtheorem{thm}{Theorem}[section]
\newtheorem{prop}[thm]{Proposition}
\newtheorem{lem}[thm]{Lemma}
\newtheorem{cor}[thm]{Corollary}
\newtheorem{rem}[thm]{Remark}}
\newcommand{\ra}{\rightarrow}
\newcommand{\dis}{\displaystyle}
\def\R{\mathbb R}
\def\N{\mathbb N}
\def\d{\text{\rm{d}}}
\def\E{\mathbb E}
\def\p{\mathbb P}
\def\la{\langle}
\def\raa{\rangle}
\def\La{\Lambda}
\def\veps{\varepsilon}
\def\S{\mathcal S}
\newcommand{\we}{\wedge}
\newcommand{\wt}{\widetilde}
\newcommand{\fin}{\hspace*{\fill}\rule{0.3em}{1ex}}
\newenvironment{proof}{{\bf \noindent Proof.}}{\fin}
\numberwithin{equation}{section}
\begin{document}

\title{Strong solutions and strong Feller properties for regime-switching diffusion processes in an infinite  state space\footnote{Supported in
 part by NSFC (No.11301030), NNSFC(No.11431014), 985-project and Beijing Higher Education Young Elite Teacher Project.}}

\author{Jinghai Shao\footnote{School of Mathematical Sciences, Beijing Normal University, Beijing, China. Email: shaojh@bnu.edu.cn} }
%\date{Manuscript}
\maketitle
\begin{abstract}
  We establish the existence and pathwise uniqueness of regime-switching diffusion processes in an infinite state space, which could be time-inhomogeneous and state-dependent. Then the strong Feller properties of these processes are investigated by using the theory of parabolic differential equations and dimensional-free Harnack inequalities.
\end{abstract}
AMS subject Classification (2010):\  60J60, 60J05, 93E03\\
\noindent \textbf{Keywords}: Regime-switching diffusions,  Strong Feller property, Harnack inequalities, Pathwise uniqueness
\section{Introduction}
The regime-switching diffusion processes (RSDPs in abbreviation) can be viewed as diffusion processes in random environments which are characterized by continuous time Markov chains. The feature of such processes is the inclusion of both continuous dynamics and discrete events at the same time. This, on the one hand, provides  more realistic models for many applications such as mathematical finance, wireless communication, biology and etc., (cf. \cite{YZ} and references therein); on the other hand, makes various properties of this system much more complicated. Refer to \cite{CH, PP, PS, SX1, Sh-a, Sh-b, YZ} for recurrent properties of this system, and to \cite{Ghosh, MY, SX2, SYZ, XY, XZ, ZY} for stability and optimal control of this system.

Precisely, a regime-switching diffusion process owns two components $(X_t,\La_t)$, where the first component $(X_t)$ satisfies a stochastic differential equation (SDE) in $\R^d$, $d\geq 1$:
\begin{equation}\label{1.1}
  \d X_t=b(t,X_t,\La_t)\d t+\sigma(t,X_t,\La_t)\d W_t,
\end{equation}
where $(W_t)$ is a Brownian motion in $\R^d$, $\sigma$ is a $d\times d$-matrix, and $b$ is a vector in $\R^d$.
While for each fixed $x\in \R^d$, the second component $(\La_t)$ is a jumping process on the state space $\S=\{1,2,\ldots,N\}$, $2\leq N\leq \infty$, satisfying
\begin{equation}\label{1.2}
\p(\La_{t+\delta}=l|\La_t=k, X_t=x)=\left\{\begin{array}{ll} q_{kl}(x)\delta+o(\delta), &\text{if}\ k\neq l,\\
                                       1+q_{kk}(x)\delta+o(\delta), & \text{if}\ k=l,
                         \end{array}\right.
\end{equation}
for $\delta>0$.
Throughout this work, for each $x\in \R^d$, $Q$-matrix $Q_x=(q_{kl}(x))$ is assumed to be irreducible and conservative (i.e. $q_i(x)=-q_{ii}(x)=\sum_{j\neq i}q_{ij}(x), \ \forall \,i\in\S$). When $(q_{ij}(x))$ is independent of $x$ and $(\La_t)$ is independent of $(W_t)$, $(X_t,\La_t)$ is said to be a state-independent regime-switching process; otherwise, it is called a state-dependent one.

Regime-switching diffusion processes $(X_t,\La_t)$ in  a finite state space (i.e. $\S$ is a finite set) have been relatively well studied. For example, in \cite{Sk}, Skorokhod has studied the asymptotic properties of this system. Recently, the books \cite{MY} and \cite{YZ} provide good surveys on the study of regime-switching processes. In particular, \cite{MY} focuses on the state-independent regime-switching diffusion process, but \cite{YZ} is mainly interested in state-dependent one.
However, limited work has been done in the study of RSDP in an infinite state space.
The existence of weak solution of RSDP in an infinite state space can be established similar to \cite{Sk}. The transience, recurrence, exponential ergodicity and the stability of RSDP in an infinite state space have been studied in
\cite{Sh-a, Sh-b, SX2}. These works reveal the essential difference between the study of RSDP in a finite state space and  that of RSDP in an infinite state space.

The motivation of this work is to study the strong Feller property of RSDPs in an infinite state space. But, to this aim, only the existence of weak solution is not enough.  Therefore, we first investigate the existence and uniqueness of strong solution of RSDPs in Section 2, which is of great meaning by itself.  Then, in Section 3, we study the strong Feller property of RSDPs in an infinite state space which extends the results in \cite{ZY}. Moreover, a result on the relationship between strong Feller property of $(X_t,\La_t)$ and strong Feller property of $(X_t^{(i)})$, $i\in\S$, is established for state-independent RSDPs.   Here, $(X_t^{(i)})$, defined by (\ref{env-x}), denotes the corresponding diffusion process of $(X_t)$ in the fixed environment $i$. We show that under some suitable conditions, if every $(X_t^{(i)})$, $i\in\S$, owns strong Feller property, then so does $(X_t,\La_t)$.  Conversely, if there exists some $i\in\S$ such that all $(X_t^{(j)})$ with $q_{ij}>0$ have strong Feller property but $(X_t^{(i)})$ does not have such property, then $(X_t,\La_t)$ does not have strong Feller property either.

The RSDPs have been used to investigate the weakly coupled elliptic systems. For instance,  in \cite{CZ1}, Chen and Zhao used the Dirichlet form theory to establish the existence of regime-switching diffusion processes in a finite state space. Then they used it to investigate the following weakly coupled elliptic system:
for $u=(u_1,\ldots,u_N):\R^d\ra \R^N$ with $N<\infty$, the weakly coupled elliptic operator:
\begin{equation*}
  \mathscr Au:=\begin{pmatrix}
    L^{(1)}&\ &\ &\ \\
    \ & L^{(2)}&\ &\ \\
    \ &\ & \cdots&\ \\
    \ &\ &\ &L^{(N)}
  \end{pmatrix} u+Qu,
\end{equation*}
where for each $k=1,2,\ldots, N$, $L^{(k)}$ is a strictly elliptic operator, i.e. $\dis L^{(k)}=\frac 12\sum_{i,j=1}^d\frac{\partial }{\partial x_i}\big(a_{ij}^{(k)}\frac{\partial}{\partial x_j}\big)+\sum_{i=1}^db_i^{(k)}\frac{\partial }{\partial x_i}$, and $Q$ is an $N\times N$ matrix-valued function. Indeed, by Skorokhod \cite{Sk}, the operator $\mathscr A$ coincides with the infinitesimal generator of $(X_t,\La_t)$ with $\S=\{1,2,\ldots,N\}$ and $N<\infty$.  They proved the solvability of the Cauchy problem of the system
\[\frac{\partial u}{\partial t}=\mathscr Au,\]
and gave a probabilistic representation theorem for solutions of the Dirichlet boundary value problem of $\mathscr A u=0$. Moreover, in \cite{CZ2}, potential theory for this elliptic system was studied, and some conditions in \cite{CZ1} were weakened. It is easy to see that our existence result on $(X_t,\La_t)$ in Section 2 can provide a probabilistic representation for above elliptic systems in the case $N=\infty$, which immediately helps us to get corresponding results in  \cite{CZ1} for elliptic systems $\mathscr A$ with $N=\infty$.
%: $Q(x)=(q_{ij}(x))$ satisfying $q_{ij}(x)\geq 0$ for $i\neq j$, and $q_i(x)=-q_{ii}(x)=\sum_{j\neq i}q_{ij}(x)<\infty$

The paper is organized as follows. In Section 2, we establish the existence and uniqueness of strong solution of RSDP.  The basic technique is to represent the $Q$-process $(\La_t)$ in terms of a stochastic differential equation with respect to (w.r.t.) a Poisson random measure, which has been widely used (see, for instance,
\cite[Section II-2.1]{Sk}, \cite{Ghosh}, \cite{YZ}). Based on this representation, we  apply the tools of stochastic analysis to establish the existence and uniqueness of RSDP, which also extends the study of SDE with degenerate diffusion coefficients.

In  Section 3, we study the strong Feller property of $(X_t,\La_t)$. This property for RSDPs in a finite state space has been studied in many works such as \cite{Ghosh, XY, XZ, ZY}.  In this work, we generalize \cite{ZY}'s method to study the strong Feller property for time-homogeneous RSDPs in an infinite state space (see Theorem \ref{t-strong} below). Moreover, we present a general result on the relationship between strong Feller property of $(X_t,\La_t)$ and strong Feller property of corresponding diffusion processes in every fixed environment. See Theorem
\ref{t-s-2} below.  Based on this result, the known results on strong Feller property of diffusion processes under H\"omander's conditions can be easily extended to deal with state-independent RSDPs.

In Section 4, we establish the dimensional-free Harnack inequalities for state-independent regime-switching diffusion processes, and then apply them to study the strong Feller property of corresponding  processes.
This method can deal with time-inhomogeneous state-independent RSDPs, but can not deal with state-dependent RSDPs at present stage. Our method relies on the construction of coupling processes of RSDPs, but the construction of coupling processes for state-dependent RSDPs is rather difficult. We have made some research on this topic in \cite{XS}, but more work is needed to the aim of establishing Harnack inequalities.
Dimension-free Harnack inequality has been widely studied for SDEs and stochastic functional differential equations. Refer to \cite{Wang, SWY, WY} and references therein for more discussions on this inequality.

\section{Existence and uniqueness of strong solution}
In this section, we shall study the existence and uniqueness of strong solution of regime-switching diffusion processes in an infinite state space. First we introduce the representation of $(\La_t)$ in terms of the Poisson random measure similar to the one  introduced in
\cite[Chapter II-2.1]{Sk} or \cite{Ghosh, YZ} for $(\La_t)$ in a finite state space.
%This presentation is not unique as being pointed out in \cite{Sk}. Here we use the representation introduced in \cite{Ghosh, YZ}, and obtain an important estimate  for it (see Lemma \ref{key-lem} below).

Precisely, for each $x\in \R^d$,
we construct a family of  intervals $\{\Gamma_{ij}(x);\ i,\,j\in \S\}$ on the real line in the following manner:
\begin{align*}
  \Gamma_{12}(x)&=[0,q_{12}(x)),\\
  \Gamma_{13}(x)&=[q_{12}(x),q_{12}(x)+q_{13}(x)),\\
  %\Gamma_{14}(x)&=[q_{12}(x)+q_{13}(x),q_{12}(x)+q_{13}(x)+q_{14}(x)),\\
  \cdots\\
  \Gamma_{21}(x)&=[q_{1}(x),q_{1}(x)+q_{21}(x)),\\
  \Gamma_{23}(x)&=[q_{1}(x)+q_{21}(x),q_{1}(x)+q_{21}(x)+q_{23}(x)),\\
  \cdots\\
  \Gamma_{31}(x)&=[q_1(x)+q_2(x),q_1(x)+q_2(x)+q_{31}(x)),
\end{align*}
and so on. For convenience of notation, we set $\Gamma_{ii}(x)=\emptyset$ and $\Gamma_{ij}(x)=\emptyset$ if $q_{ij}(x)=0$ for $i\neq j$. For each fixed $x$, these $\big\{\Gamma_{ij}(x)\big\}_{ij}$ are disjoint intervals, and the length of $\Gamma_{ij}(x)$ ($i\neq j$) equals to $q_{ij}(x)$. Define a function $h:\R^d\times \S\times \R\ra \R$ by
\begin{equation}\label{h-funct}
  h(x,i,z)=\left\{\begin{array}{ll} j-i&\quad \text{if}\ z\in \Gamma_{ij}(x),\\
                                   0& \quad \text{otherwise}.\end{array}\right.
\end{equation}
Then the process $(\La_t)$ can be expressed by the following SDE
\begin{equation}\label{1.3}
\d \La_t=\int_\R h(X_t,\La_{t-},z)N(\d t, \d z),
\end{equation}
where $N(\d t,\d z)$ is a Poisson random measure with intensity $\d t\times \d z$ on $[0,\infty)\times \R$, and independent of Brownian motion $(W_t)$ given by (\ref{1.1}). Set $\wt N(\d t,\d z)=N(\d t,\d z)-\d t\d z$.
In this work, in addition to the assumption that $Q$-matrix $(q_{ij}(x))$ is irreducible and conservative, we also need the following assumptions on the $Q$-matrix $(q_{ij}(x))$:
\begin{itemize}
  \item[$(\textrm{A1})$] There exists a positive constant $\kappa$ such that for every $i\in\S$ and every $x\in \R^d$, it holds $q_{ij}(x)=0$ for any $j\in\S$ with $|j-i|>\kappa$.
  \item[$(\textrm{A2})$] There exists a constant $c_q>0$ such that
  \[|q_{ij}(x)-q_{ij}(y)|\leq c_q|x-y|,\quad \forall \, x,y\in \R^d, \ i,j\in\S.\]
\end{itemize}
Now, we prepare a lemma on the Lipschitz continuity of the jump process which plays an important role in the existence and uniqueness of strong solution of SDE \eqref{1.1}, \eqref{1.2}.

\begin{lem}\label{key-lem}
  Suppose that (A1) and (A2) hold, then for $p>0$,
  \begin{equation}\label{ine-h}
    \int_{\R} |h(x,i,z)-h(y,i,z)|^p\d z\leq 2\kappa^{p+1}(\kappa+2i)c_q|x-y|,\quad \forall\, x,y\in \R^d,\ i\in \S.
  \end{equation}
\end{lem}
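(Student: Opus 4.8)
The plan is to bound the integrand $|h(x,i,z)-h(y,i,z)|$ pointwise in $z$ and then integrate over the (bounded) set of $z$ where the integrand is nonzero. First I would observe that, by (A1), for fixed $i\in\S$ the intervals $\Gamma_{ij}(x)$ are nonempty only for $|j-i|\le\kappa$, so $h(x,i,\cdot)$ takes values in $\{-\kappa,\ldots,\kappa\}$ and in particular $|h(x,i,z)-h(y,i,z)|\le 2\kappa$ for all $z$. Hence it suffices to control the Lebesgue measure of the ``bad set'' $B_i(x,y):=\{z\in\R: h(x,i,z)\ne h(y,i,z)\}$, because
\[
\int_\R |h(x,i,z)-h(y,i,z)|^p\,\d z\le (2\kappa)^p\,\big|B_i(x,y)\big|.
\]

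The key step is estimating $|B_i(x,y)|$. A point $z$ lies in $B_i(x,y)$ only if $z$ belongs to $\Gamma_{ij}(x)\triangle\Gamma_{ij}(y)$ for some $j$, or more precisely if the ``row-$i$ block'' partitions induced by $x$ and by $y$ disagree at $z$. Each interval $\Gamma_{ij}(x)$ has the form $[\ell_{ij}(x),\ell_{ij}(x)+q_{ij}(x))$ where the left endpoint $\ell_{ij}(x)$ is a finite sum of quantities of the form $q_k(y')=\sum_{j'\ne k}q_{kj'}(y')$ (for rows $k<i$) plus partial sums of $q_{ij'}(x)$ within row $i$. Using (A2), each $q_{ij}$ is $c_q$-Lipschitz, and by (A1) each row-sum $q_k$ is a sum of at most $\kappa$ (indeed at most $2\kappa+1$, but only $\le\kappa$ on each side) terms, hence is $(2\kappa+1)c_q$-Lipschitz — I will track the exact constant so as to land on the stated bound. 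Therefore both the left endpoint $\ell_{ij}(x)$ and the right endpoint $\ell_{ij}(x)+q_{ij}(x)$ move by at most $C(i,\kappa)\,c_q|x-y|$ when $x$ is replaced by $y$, where $C(i,\kappa)$ accounts for the $\le i$ full rows below row $i$ (each contributing a row-sum Lipschitz constant $\le (2\kappa+1)c_q$, but organized so that the total is $2i$-type) plus the $\le\kappa$ partial sums within row $i$. Summing the symmetric-difference bounds $|\Gamma_{ij}(x)\triangle\Gamma_{ij}(y)|\le (\text{endpoint displacement})$ over the at most $\kappa$ relevant values of $j$ in row $i$, and being careful that overlaps between consecutive intervals' symmetric differences are already counted, yields $|B_i(x,y)|\le \kappa(\kappa+2i)c_q|x-y|$, which combined with the factor $(2\kappa)^p$ — sharpened to $2\kappa^p$ by a slightly more careful accounting of which $z$ can actually change value — gives \eqref{ine-h}.

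The main obstacle I anticipate is the bookkeeping of the endpoint displacements: the left endpoint of $\Gamma_{ij}(x)$ depends on \emph{all} the rows $1,\ldots,i-1$ (through their total jump rates $q_1(x),\ldots,q_{i-1}(x)$) and on the earlier entries of row $i$, so one must carefully bound how many $c_q$-Lipschitz terms accumulate, and then realize that this telescopes — because $h(x,i,z)$ and $h(y,i,z)$ can only differ near the \emph{boundaries} between consecutive intervals, so the measure of disagreement is controlled by the sum of absolute displacements of a bounded number of endpoints, not by naively adding up all interval-length changes. Getting the constant exactly $2\kappa^{p+1}(\kappa+2i)c_q$ rather than merely $O(\kappa^{p+2}\,i\,c_q)$ requires exploiting (A1) twice: once to bound the number of nonzero $q_{ij}$ in row $i$ (at most $2\kappa$, but effectively $\kappa$ relevant jump \emph{targets} on the integration side) and once to bound the Lipschitz constant of each row-sum $q_k$. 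I would also handle the degenerate cases $\Gamma_{ij}(x)=\emptyset$ (when $q_{ij}(x)=0$) by noting that an interval collapsing to a point contributes displacement $\le q_{ij}(y)\le c_q|x-y|$ to the bad set, consistent with the Lipschitz bound, so no separate argument is needed.
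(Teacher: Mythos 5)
Your overall strategy coincides with the paper's: reduce the integral to the Lebesgue measure of the symmetric differences $\Gamma_{ij}(x)\Delta\Gamma_{ij}(y)$, and control these by the displacements of the interval endpoints, each endpoint being a partial sum of $c_q$-Lipschitz rates --- at most $i-1$ full row sums $q_k$ (each a sum of at most $2\kappa$ nonzero entries by (A1)) plus at most $2\kappa$ entries of row $i$. That bookkeeping is exactly what the paper does, and it is sound; your remark that a collapsed interval still contributes at most $c_q|x-y|$ is also consistent with the paper's treatment.

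The gap is in how you handle the factor $|h(x,i,z)-h(y,i,z)|^p$. You use the uniform pointwise bound $(2\kappa)^p$ on the bad set $B_i(x,y)$ and then assert that $(2\kappa)^p$ can be ``sharpened to $2\kappa^p$ by a slightly more careful accounting of which $z$ can actually change value.'' That sharpening is never carried out, and it cannot be in general: the value $2\kappa$ is attained on a set whose measure is of full order $c_q|x-y|$. For instance, with $\kappa=1$ and $i=2$, let $q_{12}$ be constant, let $q_{23}$ decrease from $c_q|x-y|$ at $x$ to $0$ at $y$, and let $q_{21}$ increase from (near) $0$ at $x$ to $c_q|x-y|$ at $y$; then $\Gamma_{23}(x)$ and $\Gamma_{21}(y)$ occupy essentially the same interval of length $c_q|x-y|$, on which $h(x,2,\cdot)=1$ and $h(y,2,\cdot)=-1$, so the integral is essentially $2^p c_q|x-y|$, exceeding the claimed bound $2\cdot 1^{p+1}(1+4)c_q|x-y|=10\,c_q|x-y|$ as soon as $p\geq 4$. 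Thus your route delivers the inequality only with $(2\kappa)^p$ in place of $2\kappa^p$ --- which suffices for every use made of the lemma in the paper ($p=1,2$), but is not the stated constant. For comparison, the paper does not take a union bound at all: it distributes the weight over the individual symmetric differences via $|h(x,i,z)-h(y,i,z)|^p\leq\sum_{j\neq i}|j-i|^p\,|\mathbf{1}_{\Gamma_{ij}(x)}(z)-\mathbf{1}_{\Gamma_{ij}(y)}(z)|$, so each term carries only $\kappa^p$; note, however, that this step rests on $|j-j'|^p\leq|j-i|^p+|j'-i|^p$ and is therefore exact only for $p\leq 1$, so the stated constant should in any case be read up to a $p$-dependent factor.
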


\begin{proof}
  In order to make the idea clear, we first consider a simple case, that is, $\S=\{1,2\}$. In this case by noting $\Gamma_{12}(x)$ and $\Gamma_{21}(x)$ are consecutive left-closed, right-open interval on $[0,\infty)$ with length $q_{12}(x)$ and $q_{21}(x)$ respectively, we obtain that, for $x\neq y\in\R^d$,
  \begin{align*}
    \int_{\R}|h(x,1,z)-h(y,1,z)|\d z&=\int_{\R}|\mathbf{1}_{\Gamma_{12}(x)}(z)-\mathbf{1}_{\Gamma_{12}(y)}(z)|\d z\\
    &=|q_{12}(x)-q_{12}(y)|\leq c_q|x-y|,
  \end{align*}
and
\begin{align*}
  \int_{\R}|h(x,2,z)-h(y,2,z)|\d z&=\int_{\R}(2-1)|\mathbf{1}_{\Gamma_{21}(x)}(z)-\mathbf{1}_{\Gamma_{21}(y)}(z)|\d z\\
  &=\int_{\R}\mathbf{1}_{(\Gamma_{12}(x)\Delta\Gamma_{12}(y))
         \cup(\Gamma_{21}(x)\Delta\Gamma_{21}(y))}(z)\d z\\
  &=|q_{12}(x)-q_{12}(y)|+|q_{12}(x)+q_{21}(x)-q_{12}(y)-q_{21}(y)|\\
  &\leq 2|q_{12}(x)-q_{12}(y)|+|q_{21}(x)-q_{21}(y)|\leq 3c_q|x-y|,
\end{align*}
where $A\Delta B=(A\backslash B)\cup(B\backslash A)$ for subsets $A$, $B$ of $\R$. Via studying this simple case, we show that the length of $\Gamma_{ij}(x)\Delta\Gamma_{ij}(y)$ can be estimated by $|x-y|$, but their coefficients are different due to the arrangement of $\Gamma_{ij}(x)$, $i,j=1,2$.

Next, we consider the general case $\S=\{1,2,\ldots,N\}$, $N\leq \infty$.
For $i\in \S$, $x\neq y$,
\begin{align*}
  &\int_{\R}|h(x,i,z)-h(y,i,z)|^p\d z
   =\int_{\R}|\sum_{j\neq i}(j-i)(\mathbf{1}_{\Gamma_{ij}(x)}(z)-\mathbf{1}_{\Gamma_{ij}(y)}(z))|^p\d z\\
  &\leq \sum_{j\neq i}|j-i|^p\int_{\R}|\mathbf{1}_{\Gamma_{ij}(x)}(z)-\mathbf{1}_{\Gamma_{ij}(y)}(z)|\d z\\
  &=\sum_{0<|j-i|\leq \kappa}\!\!|j-i|^p\Big(\big|\sum_{k=1}^{i-1}q_{k}(x)+\! \sum_{\substack{k=1\\k\neq i}}^{j-1}\! q_{ik}(x)-\sum_{k=1}^{i-1}q_k(y)-\! \sum_{\substack{k=1\\k\neq i}}^{j-1} \!q_{ik}(y)\big|\\
  &\qquad\qquad\qquad\qquad+\big|\sum_{k=1}^{i-1}q_k(x)+\!\!\sum_{\substack{k=1\\k\neq i}}^j \!q_{ik}(x)-\!\sum_{k=1}^{i-1}q_k(y)-\! \sum_{\substack{k=1\\k\neq i}}^{j} \!q_{ik}(y)\big|\Big)\\
  &\leq\!  \sum_{0<|j-i|\leq \kappa}\!\!\!\!|j-i|^p\Big(2\big|\sum_{k=1}^{i-1}\!q_k(x)\!+\! \sum_{\substack{k=1\\k\neq i}}^j \!q_{ik}(x)\!-\!\sum_{k=1}^{i-1}\!q_k(y)-\!\!\!\sum_{\substack{k=1\\k\neq i}}^{j}\! q_{ik}(y)\big|\!+\!|q_{ij}(x)\!-\!q_{ij}(y)|\Big)\\
  &\leq \kappa^p(\kappa-1)(2(2i+\kappa)-1)c_q|x-y|\\
  &\leq 2\kappa^{p+1}(\kappa+2i)c_q|x-y|.
\end{align*}
The proof is completed.
\end{proof}

Next, we give a priori estimate for $(X_t,\La_t)$.
\begin{prop}\label{m-est}
Let $(X_t,\La_t)$ be defined by (1.1), (1.3) with $(X_0,\La_0)=(x,i)$. Assume (A1) holds and further that
\begin{itemize}
  \item[$\mathrm{(A3)}$]$\la x, b(t,x,i)\raa\leq c(t)(1+|x|^2),\  \|\sigma(t,x,i)\|^2\leq c(t)(1+|x|^2)$, where $\|\sigma \|=\sqrt{\mathrm{trac}(\sigma\sigma^\ast)}$, $\sigma^\ast$ denotes the transpose of matrix $\sigma$, and $c(t)$ is a positive continuous function so that for each $T\in(0,\infty)$, $\int_0^Tc(t)\d t<\infty$;
  \item[$\mathrm{(A4)}$] There exist constants $\alpha,\,\beta\geq 0$ such that
  $q_{i}(x)\leq \alpha i+\beta |x|$, $\forall\, x\in \R^d,\, i\in \S$.
\end{itemize}
Then, for every $T\in(0,\infty)$,
\begin{align*}
&\E[\|X\|_{T}^2+\|\La\|_T^2]\\
&\leq \big(\frac{4}3|x|^2\!+\!4i^2\big) \exp\Big((4+\frac43 C_1^2)\!\int_0^T\!\!c(s)\d s+8\kappa^2(\alpha^2+\beta^2+2)(T+1)T\Big),
\end{align*}
where $\|X\|_t=\sup_{s\leq t} |X_s|$, $\|\La\|_t=\sup_{s\leq t}\La_s$, $t>0$, and $C_1$ is a positive constant determined by Burkholder-Davis-Gundy inequality.
\end{prop}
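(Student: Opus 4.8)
\
The plan is to apply It\^o's formula to $|X_t|^2$ and to $\La_t^2$ (keeping the two components separate, coupled only through the bound $q_k(x)\le\alpha k+\beta|x|$ of (A4)), take expectations after localizing so as to kill the stochastic integrals, run Gronwall's lemma to bound $\E|X_t|^2$ and $\E\La_t^2$, and then upgrade to the supremum by the Burkholder--Davis--Gundy (BDG) inequality together with Young's inequality. Handling $|X|^2$ and $\La^2$ separately is what lets the two distinct prefactors $\tfrac43$ and $4$ of the statement appear, and one should use that $\La_t\ge1$ always, so that all "affine in $|X|^2+\La^2$" bounds below may be taken homogeneous and Gronwall then produces the clean form $(\text{initial value})\times\exp(\cdots)$.

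First I would localize: set $\tau_n=\inf\{t\ge0:|X_t|\ge n\text{ or }\La_t\ge n\}$ and carry out everything below on $[0,t\we\tau_n]$, where all stochastic integrals are true martingales of zero mean; at the end one sends $n\to\infty$ and applies Fatou's lemma, and the uniform-in-$n$ bound obtained forces $\tau_n\uparrow\infty$, so non-explosion of $(X_t,\La_t)$ drops out for free. Since $(X_t)$ is continuous, It\^o's formula and (A3) give that the drift part of $|X_t|^2$ has integrand $2\la X_t,b(t,X_t,\La_t)\raa+\|\sigma(t,X_t,\La_t)\|^2\le 3c(t)(1+|X_t|^2)$, its martingale part being $\int_0^t 2\la X_s,\sigma(s,X_s,\La_s)\d W_s\raa$. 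For the jump component, \eqref{1.3} and telescoping of $\La_s^2-\La_{s-}^2$ give $\La_t^2=i^2+\int_0^t\!\int_\R\big(2\La_{s-}h(X_s,\La_{s-},z)+h(X_s,\La_{s-},z)^2\big)N(\d s,\d z)$, and the elementary estimates I would rely on---immediate from the construction of the intervals $\{\Gamma_{ij}(x)\}$, from (A1), and from (A4)---are
\[
\bigl|\textstyle\int_\R h(x,k,z)\,\d z\bigr|\le\kappa\, q_k(x),\qquad \textstyle\int_\R h(x,k,z)^2\,\d z\le\kappa^2 q_k(x),\qquad q_k(x)\le\alpha k+\beta|x|.
\]
Compensating $N$ by $\d s\,\d z$, the bounded-variation contribution to $|X_t|^2+\La_t^2$ is then dominated by $3c(s)(1+|X_s|^2)+(2\kappa\La_s+\kappa^2)(\alpha\La_s+\beta|X_s|)$, which is affine in $|X_s|^2+\La_s^2$ with explicit coefficients in $\kappa,\alpha,\beta$.

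Taking expectations on $[0,t\we\tau_n]$ kills the two martingale terms, and Gronwall applied to $t\mapsto\E[|X_{t\we\tau_n}|^2+\La_{t\we\tau_n}^2]$ gives a finite, uniform-in-$n$ bound. To reach the supremum I would re-examine the It\^o decomposition and estimate $\E\sup_{s\le t\we\tau_n}(\cdot)$ directly. The bounded-variation part is handled as above. For the continuous martingale, BDG gives $\E\sup_{s\le t\we\tau_n}\bigl|\int_0^s2\la X_u,\sigma\,\d W_u\raa\bigr|\le C_1\E\big(\int_0^{t\we\tau_n}4|\sigma^\ast X_u|^2\d u\big)^{1/2}$; by (A3), $4|\sigma^\ast X_u|^2\le 4c(u)(1+|X_u|^2)|X_u|^2\le\|X\|_{t\we\tau_n}^2\cdot4c(u)(1+|X_u|^2)$, so pulling $\|X\|_{t\we\tau_n}$ out of the square root and using $ab\le\varepsilon a^2+\tfrac1{4\varepsilon}b^2$ lets one absorb the fraction $\tfrac14$ of $\E\|X\|_{t\we\tau_n}^2$ into the left-hand side---this is what turns $\tfrac34\E\|X\|_T^2$ into $\E\|X\|_T^2$ and manufactures the $\tfrac43$---while the surviving term $\int_0^tc(u)\big(1+\E|X_{u\we\tau_n}|^2\big)\d u$ is finite by the previous step. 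The jump martingale $\int_0^s\!\int_\R(2\La_{u-}h+h^2)\wt N(\d u,\d z)$ is treated identically: its predictable quadratic variation is $\le\int_0^{t\we\tau_n}(2\kappa\La_{u-}+\kappa^2)^2q_{\La_{u-}}(X_u)\,\d u$, one extracts $\sup_u(2\kappa\La_u+\kappa^2)^2$ and trades it by Young's inequality against $\int_0^{t\we\tau_n}(\alpha\La_u+\beta|X_u|)\,\d u$ (finite expectation by the previous step), absorbing a fraction of $\E\|\La\|_{t\we\tau_n}^2$, the leftover $\tfrac14$ producing the prefactor $4$. Collecting terms yields $\phi_n(t)\le(\tfrac43|x|^2+4i^2)+\int_0^t\wt C(s)\phi_n(s)\,\d s$ for $\phi_n(t)=\E[\|X\|_{t\we\tau_n}^2+\|\La\|_{t\we\tau_n}^2]$ with $\wt C(s)=(4+\tfrac43C_1^2)c(s)+8\kappa^2(\alpha^2+\beta^2+2)(\cdot)$, and Gronwall plus $n\to\infty$ concludes.

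The hard part is precisely this last step: the quadratic variations of both martingale parts grow faster than linearly in $|X|^2+\La^2$ (they are cubic/quartic in $(|X|,\La)$ once (A3)/(A4) are used), so BDG alone does not close the loop. The remedy---which must be executed carefully to hit the stated constants---is to factor a supremum norm out of each quadratic-variation integrand, trade it off by Young's inequality against a remaining integral that is only of order one in $|X|^2+\La^2$, and then feed in the non-supremum bound already established; the precise numbers $\tfrac43$, $4$, $4+\tfrac43C_1^2$ and $8\kappa^2(\alpha^2+\beta^2+2)$ then come out of the particular choices of the Young parameters, the affine bound of the third paragraph, and (for the homogeneity) the fact that $\La_t\ge1$.
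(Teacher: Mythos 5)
Your proposal is correct in substance and shares the paper's overall skeleton (localization at $\tau_K$, It\^o's formula, BDG plus a Young-type absorption of $\tfrac14\E\|X\|_{T\we\tau_K}^2$ for the Brownian martingale, and a Gronwall closure), but it handles the switching component by a genuinely different route. You apply It\^o's formula to $\La_t^2$, which makes the jump integrand $2\La_{s-}h+h^2$; its quadratic variation is then cubic in $\La$, and this is exactly why you need the extra step of factoring $\sup_u(2\kappa\La_u+\kappa^2)$ out of $[M]_T^{1/2}$ and trading it off by Young's inequality --- the step you correctly identify as the hard part. The paper sidesteps this entirely: it keeps the \emph{first} power, writing $\La_{t\we\tau_K}\leq i+\int\!\!\int h\,\wt N+\kappa\int q_{\La_{s-}}(X_s)\,\d s$, applies the $L^2$ BDG inequality $\E\sup|\!\int\!\!\int h\,\wt N|^2\leq 4\E\int\!\!\int h^2\,\d s\,\d z\leq 4\kappa^2\E\int q_{\La_{s-}}(X_s)\,\d s$ (only linear in $(\|\La\|_s,\|X\|_s)$, so no absorption is needed for the jump part), and only then squares the supremum via $(a+b+c)^2\leq 3(a^2+b^2+c^2)$, using $\|\La\|_s\geq 1$ to homogenize just as you do. The paper also runs a single Gronwall directly on $T\mapsto\E[\|X\|_{T\we\tau_K}^2+\|\La\|_{T\we\tau_K}^2]$ rather than your two-pass (non-supremum bound first, then supremum) argument; both are valid. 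One consequence of the different decompositions is that your constants would not come out identical to the stated ones (e.g.\ the paper's $4i^2$ arises as $3i^2$ from the squaring of the sum divided by the surviving factor $\tfrac34$, whereas It\^o on $\La_t^2$ starts from $i^2$ and your Young absorption on the jump martingale would rescale it differently); this is cosmetic, since the content of the proposition is the qualitative second-moment bound, but you should not claim to reproduce the exact prefactors without redoing the bookkeeping along the paper's decomposition.
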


\begin{proof}
  Let $\tau_K=\inf\{t\geq 0;|X_t|+\La_t>K\}$, $K>0$. We always choose $K$ large enough so that $|X_0|+\La_0<K$. By It\^o's formula,
  \begin{equation}\label{1-0}
  \begin{split}
    |X_{t\wedge\tau_K}|^2&=|x|^2+2\int_0^{t\wedge \tau_K}\la X_s,b(s,X_s,\La_s)\raa \d s
                            +\int_0^{t\wedge \tau_K}\|\sigma(s,X_s,\La_s)\|^2\d s\\
                            &\quad +2\int_0^{t\wedge\tau_K}\la X_s,\sigma(s,X_s,\La_s)\d W_s\raa,
  \end{split}
  \end{equation}
  and \begin{equation}\label{1-0-1}
  \begin{split}
    \La_{t\we \tau_K}&=i+\int_0^{t\we \tau_K}\int_{\R}h(X_s,\La_{s-},z)N(\d s,\d z)\\
    &\leq i+\int_0^{t\we \tau_K}\int_{\R}h(X_s,\La_{s-},z)\wt N(\d s,\d z)+\kappa\int_0^{t\we \tau_K} q_{\La_{s-}}(X_s)\d s.
  \end{split}
  \end{equation}
  Since for any $t\in [0,T]$,
  \[\E\int_0^{t\we \tau_K}|X_s|^2\|\sigma(s,X_s,\La_s)\|^2\d s\leq K^2\int_0^tc(s)(1+K^2)\d s<\infty,\]
  this yields that $\big\{\int_0^{t\we \tau_K}\la X_s,\sigma(s,X_s,\La_s)\d W_s\raa\big\}_{t\in [0,T]}$ is a martingale. Similarly, as
  \begin{align*}\E\!\int_0^{t\we \tau_K}\!\!\!\int_{\R}h(X_s,\La_{s-},z)^2\d s\d z&\leq \kappa^2\E\!\int_0^{t\we \tau_K}\!\!q_{\La_{s-}}(X_s)\d s
   \leq \kappa^2\E\int_0^{t\we \tau_K}\!\!\alpha\La_{s-}\!+\!\beta|X_s|\d s <\infty,
  \end{align*}
  one gets that $\big\{\int_0^{t\we \tau_K} h(X_s,\La_{s-}, z)\wt N(\d s,\d z)\big\}_{t\in [0,T]}$ is a martingale.
  According to the Burkholder-Davis-Gundy inequality,
  \begin{equation}\label{1-1}
  \begin{split}
     \E\Big[\sup_{t\leq T}\big|\int_0^{t\we \tau_K}\la X_s,\sigma(s,X_s,\La_s)\d W_s\raa\big|\Big]
    &\leq C_1\E\sqrt{\int_0^{T\we\tau_K}|X_s|^2\|\sigma(s,X_s,\La_s)\|^2\d s}\\
    &\leq C_1\E\big[\|X\|_{T\we \tau_K}\sqrt{\int_0^{T\we \tau_K}\|\sigma(s,X_s,\La_s)\|^2\d s}\big]\\
    &\leq \frac 14\E\|X\|_{T\we \tau_K}^2+C_1^2\E\int_0^{T\we\tau_K}\|\sigma(s,X_s,\La_s)\|^2\d s\\
    &\leq \frac 14\E\|X\|_{T\we \tau_K}^2+C_1^2\E\int_0^{T\we \tau_K}c(s)(1+\|X\|_s^2)\d s.
    \end{split}
  \end{equation}
  Applying Burkholder-Davis-Gundy inequality again, we obtain
  \begin{equation}\label{1-2}
  \begin{split}
    &\E\Big[\sup_{t\leq T}\Big|\int_{0}^{t\we \tau_K}\int_{\R}h(X_s,\La_{s-},z)\wt N(\d s,\d z)\Big|^2\Big]\leq 4\E\int_0^{T\we \tau_K}\int_{\R}h^2(X_s,\La_{s-},z)\d s\d z\\
    &\leq 4\kappa^2 \E\int_0^{T\we \tau_K}q_{\La_{s-}}(X_s)\d s\leq 4\kappa^2\E\int_0^{T\we \tau_K}\big(\alpha\|\La\|_{s}+\beta \|X \|_s\big)\d s.
  \end{split}
  \end{equation}
  So, by (A3), (\ref{1-0-1}) and (\ref{1-2}), we obtain
  \begin{equation}
    \label{1-3}
    \begin{split}
      &\E\big[\|\La\|_{T\we \tau_K}^2\big]\\
      &\leq 3 i^2\!+\!3\E\Big[\Big(\sup_{t\leq T}\!\int_0^{t\we\tau_K}\!\!\!\int_{\R}\!\!h(X_s,\La_{s-},z)\wt N(\d s, \d z)\Big)^2\Big]\!+\!3\E\Big[\Big(\!\int_0^{T\we \tau_K}\!\!\!\int_{\R}\!\! h(X_s,\La_{s-},z)\d s\d z\Big)^2\Big]\\
      &\leq 3i^2\!+12\kappa^2\E\int_0^{T\we\tau_K}\!\!\!\! (\alpha\|\La\|_s\!+\!\beta\|X\|_s)\d s+3\kappa^2\E\Big[\Big(\int_0^{T\we \tau_K}\!\!\!(\alpha\|\La\|_s\!+\!\beta\|X\|_s)\d s\Big)^2\Big]\\
      &\leq 3i^2+6\kappa^2\E\int_0^{T\we\tau_K}((2\alpha+\beta)\|\La\|_s^2+\beta\|X\|_s^2)\d s+6\kappa^2T\E\int_0^{T\we\tau_K}(\alpha^2\|\La\|_s^2+\beta^2\|X\|_s^2)\d s\\
      &\leq 3i^2+6\kappa^2(T+1)(\alpha^2+\beta^2+2)
      \E\int_0^{T\we\tau_K}\!\!(\|\La\|_s^2+\|X\|_s^2)\d s,
    \end{split}
  \end{equation}
  where in the third inequality we have used $ 2\|X\|_s\leq  \|\La\|_s^2+\|X\|_s^2 $ as $\|\La\|_s\geq 1$.

  Consequently, combining (\ref{1-0}), (\ref{1-1}) with (\ref{1-3}), we get
  \begin{align*}
    &\E\big[\|X\|_{T\we\tau_K}^2+\|\La\|_{T\we\tau_K}^2\big]\\
    &\leq |x|^2+3\E\int_0^{T\we \tau_K}c(s)(\|\La\|_s^2+\|X\|_s^2)\d s+\frac 14\E\|X\|_{T\we \tau_K}^2\\
    &\quad +C_1^2\E\int_0^{T\we\tau_K}c(s)(\|\La\|_s^2+\|X\|_s^2)\d s+3i^2\\
    &\quad+6\kappa^2(T+1)(\alpha^2+\beta^2+2)
      \E\int_0^{T\we\tau_K}\!(\|\La\|_s^2+\|X\|_s^2)\d s
  \end{align*}
  Then, using  Gronwall's inequality, we get
  \[\E[\|X\|_{T\we\tau_K}^2\!+\|\La\|_{T\we\tau_K}^2]\leq \big(\frac{4}3|x|^2\!+\!4i^2\big) \exp\Big((4+\frac43 C_1^2)\!\int_0^T\!\!c(s)\d s+8\kappa^2(\alpha^2+\beta^2+2)(T+1)T\Big).\]
  Letting $K\ra \infty$, we obtain the desired result.
\end{proof}

Next, we consider the existence and uniqueness of strong solution of SDE \eqref{1.1}, \eqref{1.2} with non-Lipschitz coefficients.
To this aim, we introduce a class of functions:
\begin{equation}\label{u-class}
  \mathscr U:=\big\{u\in C^1((0,\infty);[1,\infty));\ \int_0^1\frac{\d s}{s u(s)}=\infty,\,\liminf_{r\downarrow 0}(u(r)+ru'(r))>0\big\}
\end{equation}
Here, the restriction that $u\geq 1$ is technical, otherwise we can replace it with $\max\{u,1\}$.
Refer to \cite{FZ} \cite{SWY} for existence and uniqueness of strong solutions of SDEs and stochastic functional differential equations under this type of non-Lipschitz coefficients.

\begin{thm}\label{t-unique}
Assume that (A1), (A2), (A3) hold, and for some constant $\alpha>0$,
\begin{equation}\label{con-1-2}
\sup_{x\in\R^d}q_i(x)\leq \alpha i,\quad \forall  \, i\in \S.
\end{equation}
Suppose
\begin{itemize}
  \item[$(\mathrm{A5})$] there exist  $u\in \mathcal U$ and increasing functions $C_i(t)\in C([0,\infty);(0,\infty))$, $i\in \S$, satisfying $\int_0^T C_i(t)\d t<\infty$ for all $T>0$, such that for all $t\geq 0$, $x,\,y\in \R^d$, $i\in\S$,
      \[\la x-y, b(t,x,i)-b(t,y,i)\raa+\frac 12\|\sigma(t,x,i)-\sigma(t,y,i)\|^2\leq C_i(t)|x-y|^2u(|x-y|^2).\]
\end{itemize}
Then there exists a unique strong solution of SDE (\ref{1.1}) and (\ref{1.2}) with $(X_0,\La_0)=(x,i_0)\in \R^d\times \S$.
\end{thm}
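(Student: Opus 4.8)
The plan is to construct the solution by the standard Picard-type successive approximation scheme adapted to the coupled system, combined with a localization argument to handle the fact that coefficients are only locally of bounded growth and that $(\La_t)$ lives on the (possibly infinite) state space $\S$. First I would reduce to a bounded domain: fix $(x,i_0)$ and, for $K>0$, stop the process at $\tau_K=\inf\{t\geq 0:\abs{X_t}+\La_t>K\}$. On the event $\{t<\tau_K\}$ only finitely many states $\{1,\dots,K\}$ are visited, and by $(\mathrm{A1})$ each jump moves $\La$ by at most $\kappa$, so $h(X_s,\La_{s-},z)$ is supported on $z$ in a bounded interval whose length is controlled by $q_{\La_{s-}}(X_s)\leq \alpha\La_{s-}\leq \alpha K$ via \eqref{con-1-2}. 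Hence the jump term has finite intensity on $[0,\tau_K)$ and the approximation is well posed.

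Next I would set up the iteration: let $(X^0_t,\La^0_t)\equiv(x,i_0)$ and define $(X^{n+1}_t,\La^{n+1}_t)$ by plugging $(X^n,\La^n)$ into the right-hand sides of \eqref{1.1} and \eqref{1.3}. For pathwise uniqueness and for convergence I would estimate $\E\big[\sup_{s\le t\wedge\tau_K}\abs{X^{n+1}_s-X^n_s}^2+\sup_{s\le t\wedge\tau_K}\abs{\La^{n+1}_s-\La^n_s}^2\big]$. The diffusion part is controlled by It\^o's formula, the Burkholder--Davis--Gundy inequality and the monotonicity-type bound $(\mathrm{A5})$, which produces a term $C_i(t)\abs{X^n_s-X^{n-1}_s}^2u(\abs{X^n_s-X^{n-1}_s}^2)$; the switching part is controlled by Lemma \ref{key-lem}, which gives
\[
\int_\R\abs{h(X^n_s,\La^n_{s-},z)-h(X^{n-1}_s,\La^n_{s-},z)}^2\d z\leq 2\kappa^3(\kappa+2K)c_q\abs{X^n_s-X^{n-1}_s},
\]
together with an extra term controlling $\abs{\La^n_{s-}-\La^{n-1}_{s-}}$ coming from the dependence of $h$ on its second argument (here one uses that $h(\cdot,i,\cdot)$ and $h(\cdot,j,\cdot)$ differ on a set of $z$-measure bounded by $q_i(x)+q_j(x)\le\alpha(i+j)$, so on $[0,\tau_K)$ this is $\le 2\alpha K$). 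Combining and using $\abs{x-y}\le 1+\abs{x-y}^2$ on the diagonal and $\abs{x-y}^2u(\abs{x-y}^2)$ dominating $\abs{x-y}^2$, I would obtain a recursion $\varphi_{n+1}(t)\le L\int_0^t\big(\varphi_n(s)+\varphi_n(s)\vee$ something$\big)\d s$ that, via the defining property $\int_0^1\frac{\d s}{su(s)}=\infty$ of $\mathscr U$ and a Bihari--LaSalle (Osgood) argument, forces $\varphi_n\to 0$ uniformly on $[0,T]$; the same computation with $n$ replaced by two arbitrary solutions gives pathwise uniqueness on $[0,\tau_K)$.

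Finally I would pass from the localized solution to a global one. Pathwise uniqueness together with existence of a weak solution (which, as the introduction notes, follows as in \cite{Sk}) yields existence of a strong solution on each stochastic interval $[0,\tau_K)$ by the Yamada--Watanabe theorem; consistency in $K$ defines a solution up to the explosion time $\tau_\infty=\lim_{K\to\infty}\tau_K$. Non-explosion, i.e. $\tau_\infty=\infty$ a.s., is exactly what Proposition \ref{m-est} delivers: its a priori bound on $\E[\norm{X}_T^2+\norm{\La}_T^2]$ (whose hypotheses $(\mathrm{A3})$, $(\mathrm{A4})$ are implied here, with $(\mathrm{A4})$ following from \eqref{con-1-2} taking $\beta=0$) shows $\p(\tau_K\le T)\le \E[\norm{X}_T^2+\norm{\La}_T^2]/K^2\to 0$, hence $\tau_\infty=\infty$. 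The main obstacle is the switching term: unlike the diffusion part, the increments of $\La$ are not Lipschitz in $\La$ in any naive sense, and the $z$-integral of $\abs{h(x,i,z)-h(y,i,z)}^p$ grows with the state index $i$ (the factor $\kappa+2i$ in Lemma \ref{key-lem}); keeping these $i$-dependent constants under control is precisely why the localization by $\tau_K$ is essential and why the non-explosion estimate of Proposition \ref{m-est} must be invoked rather than a cruder bound. One must also be careful that the coupling of $\La^{n}$ and $\La^{n-1}$ is done consistently — both driven by the \emph{same} Poisson measure $N(\d t,\d z)$ — so that their difference is genuinely small; this is automatic in the successive-approximation construction but needs to be stated.
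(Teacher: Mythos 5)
There is a genuine gap, and it sits at the hardest point of the theorem: showing that the discrete components of two solutions never separate. Your joint recursion for $\E\big[\sup_{s\le t\wedge\tau_K}|X^{n+1}_s-X^n_s|^2+\sup_{s\le t\wedge\tau_K}|\La^{n+1}_s-\La^n_s|^2\big]$ does not close. Condition (A5) controls $b(t,x,i)-b(t,y,i)$ and $\sigma(t,x,i)-\sigma(t,y,i)$ only for the \emph{same} environment $i$; on the event where the two discrete components differ you have no bound at all on the drift and diffusion differences, so the $X$-part of the recursion cannot be estimated there. Conversely, the jump part contributes (as you yourself note) a term of size $2\alpha K$ times the indicator that the discrete components already differ, i.e.\ an additive term proportional to $\p(\La^n\neq\La^{n-1})$, which the Osgood/Bihari mechanism cannot absorb: the class $\mathscr U$ is exploited through $\Psi_\veps(r)=\exp\big(\lambda\int_1^r\frac{\d s}{\veps+su(s)}\big)$, and one concludes $\varphi\equiv 0$ only from an inequality of the pure form $\varphi(t)\le L\int_0^t\varphi(s)u(\varphi(s))\,\d s$ with $\varphi(0)=0$; an extra additive term destroys that conclusion. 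Relatedly, Picard iteration is not known to converge under the non-Lipschitz condition (A5) --- that is precisely why one goes through weak existence plus pathwise uniqueness plus Yamada--Watanabe, which you also invoke in your last paragraph, so the iteration is superfluous as well as unjustified.

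The paper escapes this circularity by decoupling in time rather than in expectation. It introduces the separation time $\zeta=\inf\{t>0:\La_t\neq\La'_t\}$ of the two discrete components, applies the $\Psi_\veps$/Osgood argument only on $[0,\zeta\wedge\tau_K]$, where (A5) is applicable and yields $X=Y$ there; consequently the two jump integrands coincide and $\La=\La'$ up to $\zeta\wedge\tau_K$ as well. The genuinely new step --- and the one entirely absent from your proposal --- is the proof that $\zeta=\infty$ almost surely. This uses Lemmas \ref{lem-2} and \ref{lem-3}: a comparison of the holding times of $(\La_t)$ with the auxiliary Markov chain $(\xi^K_t)$ gives a lower bound, \emph{uniform} in $x\in\R^d$ and $1\le k\le K$, on the probability that neither $\La$ nor $\La'$ jumps during $[\zeta,\zeta+\delta_0]$; on the event $\{\zeta<M\}$ this forces the two chains to remain equal strictly beyond $\zeta$ on a set of positive probability, contradicting the definition of $\zeta$. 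Your closing paragraph (weak existence, Yamada--Watanabe, non-explosion via Proposition \ref{m-est} with $\beta=0$) matches the paper's skeleton, but without an argument for $\zeta=\infty$ pathwise uniqueness is not established.
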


Before proving this theorem, we prepare two useful lemmas, which extend the corresponding results in \cite{XZ} to RSDPs in an infinite state space. Define a family of  auxiliary processes $(\xi_t^K)$ for $K=1,2,\ldots$, which are  time-homogeneous Markov chains on $\S$ such that
\begin{equation}\label{p-xi}
\p(\xi^K_{t+\delta}=j|\xi^K_t=i)=\left\{\begin{array}{ll}\alpha K\delta+o(\delta),&\ \text{if}\  0<|j-i|\leq \kappa,\ j\geq 1,\\
1-(\kappa\we(i\!-\!1)\!+\!\kappa) \alpha K\delta+o(\delta),&\ \text{if}\ j=i,\end{array}\right.
\end{equation} for $\delta>0$ small enough.
Denote by $\{p_{\xi,K}(t,i,j);\ t\geq 0,\,i,j\in\S\}$ the transition function of the Markov chain $(\xi_t^K)$.
By the theory of jump process (cf. \cite[Section 1.2]{Chen}), it holds
\begin{equation}\label{xi-est}
\lim_{t\ra 0}\frac{\log p_{\xi,K}(t,i,i)}{t}=-(\kappa\we (i\!-\!1)+\kappa)\alpha K,\quad i\in \S.
\end{equation}

\begin{lem}\label{lem-2}
Let $(X_t,\La_t)$ satisfy (\ref{1.1}), (\ref{1.2}) with initial condition $(X_0,\La_0)=(x,i_0)$.
Assume (A1), (A2) and (\ref{con-1-2}) hold. Then for each $K=1,2,\ldots$, and  every $t>0$,
\begin{equation}\label{est-1}
\begin{split}
&\p(\La_{t+2\delta}=k,\La_{t+\delta}=k|\La_t=k,X_t=x)\geq \p(\xi^K_{\delta}=k|\xi^K_{0}=k)^2,%\p(\xi^K_{t+\delta}=k|\xi^K_t=k),
\quad \forall\,1\leq k\leq K,
\end{split}
\end{equation}
for $\delta>0$ small enough.
\end{lem}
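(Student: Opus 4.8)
The plan is to use the Markov property of $(X_t,\La_t)$ to factorize the two-step probability on the left of \eqref{est-1} into two one-step factors, to bound each factor from below by the probability that $(\La_\cdot)$ makes no jump on the relevant interval, and finally to compare this ``no-jump'' probability with $p_{\xi,K}(\delta,k,k)$ by means of the short-time asymptotics \eqref{xi-est}. The auxiliary chain $(\xi^K_t)$ of \eqref{p-xi} enters only through \eqref{xi-est}; everything else is a direct estimate on $(X_t,\La_t)$ using \eqref{con-1-2}.

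The first step is a one-step survival bound: for every $s\geq 0$, $y\in\R^d$ and $1\leq k\leq K$,
\[
\p(\La_{s+\delta}=k\mid X_s=y,\La_s=k)\ \geq\ \p\big(\La_u=k\ \text{for all }u\in[s,s+\delta]\ \big|\ X_s=y,\La_s=k\big)\ \geq\ e^{-\alpha k\delta}.
\]
The first inequality is trivial. For the second I use the representation \eqref{1.3}: up to the first jump time of $(\La_\cdot)$ after $s$, the component $(X_\cdot)$ solves the SDE in the frozen environment $k$, i.e.\ it coincides with the diffusion $(\bar X_u)$ given by $\d\bar X_u=b(u,\bar X_u,k)\,\d u+\sigma(u,\bar X_u,k)\,\d W_u$, $\bar X_s=y$, and $(\La_\cdot)$ stays at $k$ on $[s,s+\delta]$ precisely when $N$ puts no atom in $\{(u,z):u\in(s,s+\delta],\ z\in\bigcup_{j}\Gamma_{kj}(\bar X_u)\}$, a region of $\d u\,\d z$-measure $\int_s^{s+\delta}q_k(\bar X_u)\,\d u$. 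Since $N$ and $W$ are independent, conditioning on $(\bar X_u)_{u\in[s,s+\delta]}$ and then invoking \eqref{con-1-2} gives
\[
\p\big(\La_u=k\ \forall\, u\in[s,s+\delta]\ \big|\ X_s=y,\La_s=k\big)=\E\Big[\exp\Big(-\int_s^{s+\delta}q_k(\bar X_u)\,\d u\Big)\Big]\ \geq\ e^{-\alpha k\delta}.
\]

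Next, since Theorem \ref{t-unique} ensures that $(X_t,\La_t)$ is a (time-inhomogeneous) Markov process, conditioning at time $t+\delta$ yields
\[
\p(\La_{t+2\delta}=k,\La_{t+\delta}=k\mid\La_t=k,X_t=x)=\E\big[\mathbf{1}_{\{\La_{t+\delta}=k\}}\,g(X_{t+\delta})\ \big|\ \La_t=k,X_t=x\big],
\]
with $g(y):=\p(\La_{t+2\delta}=k\mid X_{t+\delta}=y,\La_{t+\delta}=k)$. Applying the one-step bound at $s=t+\delta$ gives $g(y)\geq e^{-\alpha k\delta}$ for all $y$, and applying it at $s=t$ gives $\p(\La_{t+\delta}=k\mid X_t=x,\La_t=k)\geq e^{-\alpha k\delta}$, whence $\p(\La_{t+2\delta}=k,\La_{t+\delta}=k\mid\La_t=k,X_t=x)\geq e^{-2\alpha k\delta}$. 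Finally, since $1\leq k\leq K$ and $\kappa\geq1$ (the case $\kappa<1$ being trivial, as then $(\La_t)$ never jumps), one has $(\kappa\we(k-1)+\kappa)\alpha K\geq\kappa\alpha K\geq\alpha k$, so by \eqref{xi-est} there is $\delta_k>0$ with $p_{\xi,K}(\delta,k,k)\leq e^{-\alpha k\delta}$ for $0<\delta\leq\delta_k$. Taking $\delta\leq\min_{1\leq k\leq K}\delta_k$,
\[
\p(\La_{t+2\delta}=k,\La_{t+\delta}=k\mid\La_t=k,X_t=x)\ \geq\ e^{-2\alpha k\delta}=\big(e^{-\alpha k\delta}\big)^2\ \geq\ p_{\xi,K}(\delta,k,k)^2=\p(\xi^K_\delta=k\mid\xi^K_0=k)^2,
\]
which is \eqref{est-1}.

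The step I expect to be the main obstacle is the rigorous justification of the no-jump identity in the first estimate: one must decouple the conditioning on the trajectory of $(X_\cdot)$ from that on the Poisson measure $N$, which are intertwined through the SDE, by replacing $(X_\cdot)$ with the auxiliary environment-$k$ diffusion $(\bar X_\cdot)$ up to the first $\La$-jump and exploiting the independence of $N$ and $W$. Some care is also needed in interpreting the conditioning on the null event $\{X_t=x\}$ through the transition function of the Markov process $(X_t,\La_t)$, and in checking that the final comparison via \eqref{xi-est} can be chosen uniform over the finitely many states $1\leq k\leq K$.
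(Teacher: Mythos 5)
Your proof is correct and follows the same route as the paper's: a Markov-property factorization of the two-step probability into two one-step factors, each bounded below by the corresponding one-step probability of the auxiliary chain $\xi^K$. The only difference is that where the paper asserts the one-step comparison $\p(\La_{s+\delta}=k\mid\La_s=k,X_s=x)\geq\p(\xi^K_{\delta}=k\mid\xi^K_0=k)$ directly from \eqref{1.2} and \eqref{p-xi}, you route it through the explicit no-jump probability $e^{-\alpha k\delta}$ via the Poisson-random-measure representation, which is actually the more careful justification; just note that your final use of \eqref{xi-est} needs the strict inequality $(\kappa\we(k-1)+\kappa)\alpha K>\alpha k$, which holds except in the degenerate configuration $\kappa=K=k=1$ (a case equally untreated by the paper and irrelevant to how the lemma is applied).
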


\begin{proof}
  By the definition of $(\xi_t)$ and (\ref{1.2}), we have for any $s>0$,
  \[\p(\La_{s+\delta}=k|\La_s=k,X_s=x)\geq \p(\xi^K_{s+\delta}=k| \xi^K_s=k),\quad \forall\,1\leq k\leq K, \ \forall\,x\in \R^d\] for $\delta>0$ small enough.
  Then, by the Markov property of $(X_t,\La_t)$, we obtain
  \begin{align*}
    &\p(\La_{t+2\delta}=k,\La_{t+\delta}=k|\La_t=k,X_t=x)\\
    &= \p(\La_{t+2\delta}=k\big|\La_{t+\delta}=k, X_{t+\delta}\in \R^d)\p(\La_{t+\delta}=k|\La_t=k, X_t=x)\\
    &\geq\p(\xi_{t+2\delta}^K=k\big|\xi_{t+\delta}^K=k)\p(\xi_{t+\delta}^K=k|\xi_t^K=k)\\
    &=\p(\xi_{\delta}^K=k|\xi_0^K=k)^2, \quad \forall\,1\leq k\leq K.
  \end{align*}
  where the time-homogeneous property of $(\xi_t^K)$ is used in the last step.
\end{proof}

\begin{lem}\label{lem-3}
Under the same assumptions as that of Lemma \ref{lem-2}, for each $K=1,2,\ldots$, it holds
\begin{equation}\label{est-2}
\p(\eta\geq t|\La_0=k,X_0=x)\geq \exp\big(-\!(\kappa\we\!(k\!-\!1)\!+\!\kappa)\alpha K t\big),\quad t>0,\ 1\leq k\leq K, x\in \R^d,
\end{equation}
where $\eta=\inf\{t>0;\ \La_t\neq \La_0\}$. This yields further that \[\lim_{t\ra 0}\p(\eta\geq t|\La_0=k,X_0=x)=1
\] uniformly for $1\leq k\leq K$ and $x\in \R^d$.
\end{lem}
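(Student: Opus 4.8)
The plan is to establish the lower bound on $\p(\eta\ge t\mid\La_0=k,X_0=x)$ by comparing the holding time of $(\La_t)$ with that of the auxiliary chain $(\xi_t^K)$ from \eqref{p-xi}, and then to deduce the uniform convergence statement from \eqref{xi-est}. First I would fix $K$ and $1\le k\le K$, and partition the time interval $[0,t]$ into $n$ equal pieces of length $t/n$. On each piece, conditioning on $\La_{jt/n}=k$ and using the Markov property of $(X_t,\La_t)$ together with the elementary comparison $\p(\La_{s+\delta}=k\mid\La_s=k,X_s=x)\ge\p(\xi^K_{s+\delta}=k\mid\xi^K_s=k)$ already exploited in the proof of Lemma~\ref{lem-2}, I would bound
\[
\p(\La_{t/n}=k,\ldots,\La_{(n-1)t/n}=k,\La_t=k\mid\La_0=k,X_0=x)\ge \p(\xi^K_{t/n}=k\mid\xi^K_0=k)^n = p_{\xi,K}(t/n,k,k)^n.
\]

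The second step is to observe that the event on the left, as $n\to\infty$, decreases to $\{\La_s=k\text{ for all }s\in[0,t]\}=\{\eta\ge t\}$ up to a null set; indeed, since $(\La_s)$ is right-continuous with left limits and takes values in the discrete set $\S$, staying at $k$ along a mesh that becomes dense forces the whole path on $[0,t]$ to remain at $k$ (a jump would produce a discrete-valued excursion persisting on some subinterval, hence visible at some mesh point for $n$ large). Taking $n\to\infty$ and using $\lim_{n\to\infty}p_{\xi,K}(t/n,k,k)^n=\exp\big(t\lim_{s\to0}\frac{\log p_{\xi,K}(s,k,k)}{s}\big)=\exp\big(-(\kappa\wedge(k-1)+\kappa)\alpha K t\big)$ by \eqref{xi-est} yields \eqref{est-2}.

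For the final assertion, since $\kappa\wedge(k-1)+\kappa\le 2\kappa$ for every $k$, the bound \eqref{est-2} gives $\p(\eta\ge t\mid\La_0=k,X_0=x)\ge e^{-2\kappa\alpha K t}$ uniformly in $1\le k\le K$ and $x\in\R^d$; letting $t\downarrow0$ and using that $\p(\eta\ge t\mid\cdot)\le1$ forces $\p(\eta\ge t\mid\La_0=k,X_0=x)\to1$ uniformly over this range.

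The main obstacle I anticipate is the measure-theoretic justification that the decreasing intersection of the finite-mesh "stay at $k$" events really coincides (almost surely) with $\{\eta\ge t\}$: one must rule out the possibility that $(\La_s)$ leaves $k$ and returns between consecutive mesh points for all $n$. This is handled by the càdlàg property and discreteness of $\S$ — if $\La$ leaves $k$ at some time in $[0,t]$, then either it never returns (so is away from $k$ on a whole right-neighbourhood, caught by the mesh) or it returns, but since jumps are isolated the sojourn away from $k$ has positive length, again caught once the mesh is fine enough — but this needs to be stated carefully rather than waved through. The comparison inequality itself and the exponential limit are routine given \eqref{p-xi}, \eqref{xi-est}, and the Markov property.
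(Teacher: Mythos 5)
Your proposal is correct and follows essentially the same route as the paper: discretize $[0,t]$ by a mesh, iterate the comparison $\p(\La_{s+\delta}=k\mid\La_s=k,X_s=x)\ge\p(\xi^K_{s+\delta}=k\mid\xi^K_s=k)$ via the Markov property to get the bound $p_{\xi,K}(t/n,k,k)^n$, and pass to the limit using \eqref{xi-est}. The only differences are cosmetic — the paper uses the nested dyadic mesh $\{mt/2^n\}$ (which makes the mesh events genuinely decreasing) and dismisses the identification of the limiting event with $\{\eta\ge t\}$ by citing right continuity, whereas you use a general uniform mesh and spell out the c\`adl\`ag argument the paper leaves implicit.
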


\begin{proof}
  Due to the right continuity of the paths of $(X_t,\La_t)$, applying Lemma \ref{lem-2}, we obtain that
  \begin{align*}
    &\p(\eta\geq t|\La_0=k,X_0=x)
    =\p(\La_u=k,0\leq u\leq t|\La_0=k,X_0=x)\\
    &=\lim_{n\ra \infty}\p\big(\La_{\frac{mt}{2^n}}=k,1\leq m\leq 2^n\big|\La_0=k,X_0=x\big)\\
    &\geq\lim_{n\ra \infty}\p(\xi^K_{t/2^n}=k|\xi_0^K=k)^{2^n}
    =\lim_{n\ra \infty}\exp\Big(\frac{t\log p_{\xi,K}( t/2^n ,k,k)}{t/2^n}\Big)\\
    &\geq \exp\big(-\!(\kappa\we\!(k\!-\!1)\!+\!\kappa)\alpha K t\big),
  \end{align*}
  which is the desired result.
\end{proof}

\noindent\textbf{Proof of Theorem \ref{t-unique}:}
  Using Lemma \ref{key-lem}, it is easy to see that conditions (A1-A2) together with the continuity of $x\mapsto b(x,i)$ and $ x\mapsto \sigma(x,i)$ for each $i\in \S$ ensure the existence of a weak solution of SDE (\ref{1.1}) and (\ref{1.2}) (cf. \cite[Theorem 175]{Situ}). Therefore, according to the Yamada-Watanabe principle (cf. \cite[Theorem 137]{Situ}), we only need to show that the pathwise uniqueness holds for SDE (\ref{1.1}), (\ref{1.2}) to get the desired strong solution.

  Let $(X_t,\La_t)$ and $(Y_t,\La_t')$ both be solutions of SDE (\ref{1.1}) (\ref{1.2}) with the same initial condition $X_0=Y_0=x$, $\La_0=\La_0'=i_0$. Set $Z_t=X_t-Y_t$ for simplicity. Let $\tau_K=\inf\{t\geq 0; |X_t|+|Y_t|+\La_t+\La_t'>K\}$, $K>0$, and
  \begin{equation}\label{meeting}
  \zeta=\inf\{t>0;\La_t\neq \La_t'\}.
  \end{equation}
In the following, we take $K$ large enough that $|x_0|+i_0<K/2$.
By (A5), if $\La_t=\La_t'$ for $t\leq T$, It\^o's formula yields that
\begin{equation}\label{est-4}
\begin{split}
\d |Z_t|^2&=2\la Z_t, b(t,X_t,\La_t)-b(t,Y_t,\La_t')\raa\d t+\|\sigma(t,X_t,\La_t)-\sigma(t,Y_t,\La_t')\|^2 \d t\\
&\quad +2\la Z_t,(\sigma(t,X_t,\La_t)-\sigma(t,Y_t,\La_t'))\d B_t\raa\\
&\leq 2 C_{\La_t}(t)|Z_t|^2u(|Z_t|^2)\d t+2\la Z_t,(\sigma(t,X_t,\La_t)-\sigma(t,Y_t,\La_t'))\d B_t\raa.
\end{split}
\end{equation}
On the other hand, $u\in \mathscr U$ yields that there are positive constants $\lambda$, $\rho_0$ such that
\[u(r)+ru'(r)\geq \lambda,\quad r\in [0,\rho_0].\]
Let
\[\Psi_\veps(r)=\exp\Big(\lambda \int_1^r \frac{\d s}{\veps+su(s)}\Big),\ \ r,\,\veps\geq 0.\]
Then for any $\veps>0$, we have $\Psi_\veps\in C^2([0,\infty))$ and
\begin{gather*}
  ru(r)\Psi_\veps'(r)=\frac{\lambda ru(r)}{\veps+ru(r)}\Psi_\veps(r)\leq \lambda \Psi_\veps(r),\\
  \Psi_\veps''(r)=\frac{\lambda^2-\lambda(u(r)+ru'(r))}{(\veps+ru(r))^2}\leq 0, \quad r\in [0,\rho_0].
\end{gather*}
By \eqref{est-4} and the It\^o's formula, we get
\begin{align*}
  &\d \Psi_\veps(|Z_t|^2)\leq \lambda C_{\Lambda_t}(t)\Psi_{\veps}(|Z_t|^2)\d t+2\Psi_\veps'(|Z_t|^2)\la Z_t,(\sigma(t,X_t,\La_t)-\sigma(t,Y_t,\La'_t))\d W_t\raa.
\end{align*}
Hence, by Gronwall's inequality, we obtain
\[\E\Psi_\veps(|Z_{t\wedge\tau_K\wedge \zeta}|^2)\leq e^{\lambda \bar Ct}\Psi_\veps(0),\] for $t\leq T$ and some constant $\bar C$ depending on $T$ and $K$. Letting $\veps\downarrow 0$ and noting $\Psi_0(0)=0$, we get
\[\E|X_{t\we\tau_K\we \zeta}-Y_{t\we \tau_K\we \zeta}|^2=\E|Z_{t\we\tau_K\we \zeta}|^2=0, \ t\in [0,T].\]
Hence, for each $t\in [0,T]$, $X_{t\we\tau_K\we \zeta}=Y_{t\we\tau_K\we \zeta}$ almost surely. The continuity of the paths of $(X_t)$ and $(Y_t)$ yields further that almost surely
\begin{equation}\label{eq-xy}
X_{t\we \tau_K\we \zeta}=Y_{t\we\tau_K\we \zeta}, \ \forall\, t\in[0,T].
\end{equation}
This means that before the separation time of $(\La_t)$ and $(\La_t')$, the processes $(X_t)$ and $(Y_t)$ have to move together before exiting the closed ball $\{z\in\R^d;\ |z|\leq K\}$.

Now we study the behavior of $(\La_t)$ and $(\La_t')$. Since
\[\La_{t\we \tau_K\we \zeta}-\La'_{t\we\tau_K\we \zeta}=\int_0^{t\we\tau_K\we\zeta}\!\!\!\int_{\R}\!(h(X_s,\La_{s-},z)-h(Y_s,\La'_{s-},z))N(\d s,\d z),\]
by (\ref{eq-xy}) and the definition of $\zeta$,  the integral of the right hand side of the previous equation equals to 0.  Invoking the right continuity of the paths of $(\La_t)$ and $(\La_t')$, we get  almost surely
\begin{equation}\label{eq-lam}
\La_{t\we \tau_K\we \zeta}=\La'_{t\we \tau_K\we \zeta},\quad \forall\,t\in [0,T].
\end{equation}
By Proposition \ref{m-est}, conditions (A3) and (\ref{con-1-2}) yield that $\lim_{K\ra \infty} \tau_K=\infty$ almost surely.
Thanks to (\ref{eq-xy}) and (\ref{eq-lam}), to complete the proof of this theorem, we only need to show that $\zeta=\infty$ almost surely, which is equivalent to show that for any constant $M>0$, $\zeta\we M=M$. Take a constant $M$ and introduce  $\gamma=\zeta\we M$, $B=\{\omega;\gamma(\omega)<M\}$. We claim that $\p(B)=0$.

Indeed, if $\p(B)>0$, then by (\ref{eq-xy}) and (\ref{eq-lam}), and taking $T, t, K$ large enough such that $t>M$, $\tau_K>M$ a.s., we have for almost surely $\omega\in B$,
\[X_s(\omega)=Y_s(\omega),\ \forall\,s\leq \zeta(\omega)<M,\quad \La_{\zeta(\omega)}=\La'_{\zeta(\omega)}.\]
Let
\[\eta_{\La}=\inf\{s>\gamma;\La_{s}\neq \La_{\gamma}\},\ \ \eta_{\La'}
=\inf\{s>\gamma;\La'_{s}\neq \La'_{\gamma}\}. \]
It is easy to see that both $\eta_{\La}$ and $\eta_{\La'}$ are stopping time, and not smaller than $\gamma$.
By Lemma \ref{lem-3}, there exists $\delta_0>0$ such that
\[\inf_{1\leq k\leq K, x\in \R^d }\p\big(\eta_{\La}\geq \gamma+\delta_0|\La_\gamma=k,X_\gamma=x\big)\geq 1-\frac 14\p(B),\]
and
\[\inf_{1\leq k\leq K, x\in \R^d }
\p\big(\eta_{\La'}\geq \gamma+\delta_0|\La'_\gamma=k,Y_\gamma=x\big)\geq 1-\frac 14 \p(B).\]
Moreover,
\[\p(\eta_{\La}\!>\!\gamma\!+\!\delta_0)=\!\int_{\R^d\times\S}\!\!
\p\big(\eta_{\La}\!>\!\gamma\!+\!\delta_0|\La_{\gamma}=k,X_\gamma=x\big)
\p\big((X_\gamma,\La_\gamma)\in(\d x,\d k)\big)\geq 1\!-\!\frac 14\p(B).
\]
Similarly,
\[\p(\eta_{\La'}>\gamma+\delta_0)\geq 1-\frac 14\p(B).\]
Therefore, we get that
\begin{equation}\label{1-4}
  \p\big(\{\eta_{\La'}>\gamma+\delta_0\}\cap B\big)\geq \p(\eta_{\La'}>\gamma+\delta_0)-\p(B^c)
  \geq \frac 34\p(B)>0,
\end{equation}
and, further that
\begin{equation}\label{1-5}
\begin{split}
&\p \big(\{\eta_{\La}>\gamma+\delta_0\}\cap\{\eta_{\La'}>\gamma+\delta_0\}\cap B\big)\\
&\geq \p\big(\eta_{\La}>\gamma+\delta_0\big)-1+\frac 34\p(B)\geq \frac 12\p(B)>0.
\end{split}
\end{equation}
Let $\tilde\eta=\min\{\eta_{\La},\eta_{\La'}\}$. As $\La_{\gamma}=\La'_{\gamma}$, we know $\La_u=\La'_u$ for any $\zeta\leq u\leq \tilde\eta$.
Define a new stopping time $\tilde \zeta$ by
\[\tilde \zeta=\tilde \eta\,\mathbf{1}_{\zeta\leq M}+\zeta\mathbf{1}_{\zeta>M}.\]
Then (\ref{1-5}) implies that $\p\big(\{\tilde \zeta>\zeta\}\cap B\big)>0$, which means that
there exists a subset of $B$ of positive probability, such that
\[\zeta<\tilde \zeta,\quad \text{and}\ \forall \,t\leq \tilde \zeta,\ \La_t=\La_t'.\]
But this contradicts the definition of $\zeta$, which requires $|\La_t-\La_t'|>0$ for points close to $\zeta$ from the right. We complete the proof of this theorem.
\fin

\section{Strong Feller properties for time-homogeneous RSDPs}
In this section, we go to study strong Feller property of time-homogeneous regime-switching diffusion process $(X_t,\La_t)$ satisfying
\begin{equation}\label{1.1-h}
\d X_t=b(X_t,\La_t)\d t+\sigma(X_t,\La_t)\d W_t.
\end{equation}
In next section, we shall study strong Feller property of time-inhomogeneous, state-independent regime-switching diffusion process via dimension-free Harnack inequalities.
Corresponding to $(X_t,\La_t)$, there exists a family of diffusion processes $(X_t^{(i)})$, $i\in \S$, being the solutions of SDEs
\begin{equation}\label{env-x}
\d X_t^{(i)}=b(X_t^{(i)},i)\d t+\sigma(X_t^{(i)},i)\d W_t,\ \ i\in \S.
\end{equation}
$(X_t^{(i)})$ represents the behavior of $(X_t)$ in the fixed state $i$.
Various properties of $(X_t,\La_t)$ are closely related to the family of processes $\{(X_t^{(i)})_{t\geq 0}; i\in \S\}$.
Refer to, for example, \cite{CH, PS, SX1, Sh-a} for the study of recurrent property and stability of $(X_t,\La_t)$ in terms of the behavior of $(X_t^{(i)})$.
In the study of strong Feller property for $(X_t,\La_t)$ in a finite state space, \cite{XZ} established this property under the assumption that for each
$i\in \S$,  $(X_t^{(i)})$ has strong Feller property and the transition density exists; \cite{ZY} used the results on parabolic differential equations to establish this property. An important condition   posed in \cite{ZY} is a uniformly elliptic condition for each $(X_t^{(i)})$. This condition guarantees that $(X_t^{(i)})$ owns strong Feller property.

In the rest of this section, we first extend \cite{ZY}'s result to time-homogeneous RSDP in an infinite state space. Then for state-independent regime-switching diffusion process, we provide a general result on the relationship between the strong Feller property of $(X_t,\La_t)$ with the strong Feller property of $(X_t^{(i)})$, $i\in \S$.

\begin{thm}\label{t-strong} Let $(X_t,\La_t)$ be a time-homogeneous RSDP satisfying (\ref{1.1-h}) and (\ref{1.2}).
Assume that (A1), (A2), (A3) and (\ref{con-1-2}) hold. Suppose there exists a constant $c>0$ so that
\begin{align*}
  |b(x,i)-b(y,i)|\leq c|x-y|,\quad \|\sigma(x,i)-\sigma(y,i)\|\leq c|x-y|,\quad \forall\,x,y\in\R^d,\ i\in \S,
\end{align*} and  $a(x,i):=\sigma(x,i)\sigma(x,i)^\ast$ satisfies
\[\la a(x,i)\xi,\xi\raa\geq \lambda |\xi|^2,\quad \xi\in \R^d,\]
for some constant $\lambda>0$ and for all $(x,i)\in \R^d\times \S$. Then the process $(X_t,\La_t)$ has strong Feller property.
\end{thm}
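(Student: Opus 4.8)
The plan is to establish the strong Feller property for $(X_t,\La_t)$ by a perturbation argument that builds the semigroup of $(X_t,\La_t)$ out of the semigroups of the fixed-environment diffusions $(X_t^{(i)})$, together with the jump structure of $(\La_t)$. First I would fix $f$ bounded and measurable on $\R^d\times\S$ and write $f_i(\cdot)=f(\cdot,i)$. The uniform ellipticity and global Lipschitz conditions on $b(\cdot,i),\sigma(\cdot,i)$ guarantee (e.g.\ via the classical theory of parabolic equations, as in \cite{ZY}) that each $(X_t^{(i)})$ has a transition density $p_t^{(i)}(x,y)$ which is jointly continuous and satisfies Gaussian-type bounds; in particular the fixed-environment semigroups $P_t^{(i)}$ are strong Feller: $P_t^{(i)}f_i\in C_b(\R^d)$ for bounded measurable $f_i$. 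The key observation, which I would make precise using the Poisson-random-measure representation \eqref{1.3} and the Lipschitz estimate of Lemma \ref{key-lem}, is a Duhamel/Dynkin-type identity for $P_tf(x,i)=\E_{(x,i)}f(X_t,\La_t)$: decomposing on whether $(\La_s)$ has jumped by time $t$, and using that before the first jump the first component evolves as $(X_t^{(i)})$,
\begin{equation*}
P_tf(x,i)=e^{-q_i(x)t}P_t^{(i)}f_i(x)\;+\;\int_0^t P_{t-s}^{(i)}\!\Big(\sum_{j\neq i}q_{ij}(\cdot)\,(P_sf)(\cdot,j)\Big)(x)\,\d s,
\end{equation*}
or a variant of it (one may also first kill at rate $q_i$ and then add back). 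Because of (A1) the inner sum over $j$ has at most $2\kappa$ terms, and because of (A2) and \eqref{con-1-2} the coefficients $q_{ij}(\cdot)$ are Lipschitz and locally bounded.

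Given such an identity, the strong Feller property would be obtained by a bootstrap/iteration in the number of jumps. Writing $P_t=\sum_{n\ge0}P_t^{(n)}$ where $P_t^{(n)}$ collects the contribution of trajectories of $(\La_t)$ with exactly $n$ jumps in $[0,t]$, one shows inductively that each $x\mapsto P_t^{(n)}f(x,i)$ is continuous: $P_t^{(0)}f(x,i)=e^{-q_i(x)t}P_t^{(i)}f_i(x)$ is continuous since $x\mapsto q_i(x)$ is continuous (Lipschitz) and $P_t^{(i)}$ is strong Feller; and $P_t^{(n+1)}f(x,i)=\int_0^t P_{t-s}^{(i)}\big(\sum_{j}q_{ij}(\cdot)P_s^{(n)}f(\cdot,j)\big)(x)\,\d s$, where the integrand, as a function of the starting point after applying the strong Feller operator $P_{t-s}^{(i)}$ to a bounded measurable function, is continuous and uniformly bounded, so dominated convergence gives continuity of the $s$-integral. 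The a priori moment bound of Proposition \ref{m-est} (valid here since (A3) and \eqref{con-1-2} hold) together with the jump-rate control $q_i(x)\le\alpha i$ controls the number of jumps and gives summability of the series $\sum_n\|P_t^{(n)}f\|_\infty$ locally uniformly in $x$, so the continuity passes to the limit $P_tf(\cdot,i)$. Boundedness of $P_tf$ is clear since $|f|$ is bounded.

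I expect the main obstacle to be making the series decomposition rigorous and establishing the required \emph{local uniformity in $x$} of the tail estimates $\sum_{n\ge M}\|P_t^{(n)}f\|_\infty\to0$, because the state space $\S$ is infinite: the jump rates $q_i(x)$ are unbounded in $i$ (only $q_i(x)\le\alpha i$), so along a trajectory the chain $(\La_t)$ can drift to high levels where jumps accelerate, and one must rule out an explosion of the number of jumps before time $t$. This is exactly where the auxiliary dominating chains $(\xi_t^K)$ of \eqref{p-xi} and Lemmas \ref{lem-2}--\ref{lem-3}, or directly Proposition \ref{m-est} applied on the event $\{\|\La\|_t\le K\}$ plus a union bound over the excursions above level $K$, are needed; combining $\p(\|\La\|_t>K)\to0$ (uniformly for $x$ in compacts, from Proposition \ref{m-est}) with the fact that, restricted to $\La\le K$, the jump rate is bounded by $\alpha K$ so the number of jumps is dominated by a Poisson$(2\kappa\alpha K t)$ variable, yields the needed tail bound. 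A secondary technical point is justifying the Duhamel identity itself (a strong-Markov/first-jump-time decomposition using the Poisson random measure representation and independence of $N$ and $W$), and checking that $P_sf(\cdot,j)$ remains bounded measurable at each stage so that the strong Feller operators $P_{t-s}^{(i)}$ may legitimately be applied — both follow from the construction in Section 2 and standard monotone-class arguments.
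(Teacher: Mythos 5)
Your route is genuinely different from the paper's. The paper does not expand over the number of jumps at all: it truncates simultaneously in $x$ (via a cutoff $\phi^K$ applied to $a$, $b$ and $q$) and in the state space (collapsing all levels above $K+\kappa$ into one extra state $K+\kappa+1$), so that the truncated process is a state-dependent RSDP on a \emph{finite} state space with bounded coefficients; the strong Feller property of the truncated semigroup is then quoted directly from \cite[Theorem 3.10]{ZY} (a parabolic-PDE result), and Proposition \ref{m-est} gives $\p_{x,i}(\tau_K\leq t)\to 0$ uniformly on compacts, which transfers the property to $(X_t,\La_t)$ by a three-term triangle inequality. Your scheme instead rebuilds $P_t$ from the fixed-environment semigroups $P_t^{(i)}$ by a first-jump expansion, which is closer in spirit to \cite{XZ} and to the paper's own Theorem \ref{t-s-2}; what it buys is independence from the PDE machinery of \cite{ZY}, at the price of having to control the jump count in the infinite state space, an issue you correctly identify and handle with the same Proposition \ref{m-est}.

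One step needs repair. Because the switching is state-dependent, the probability that $\La$ has not jumped by time $t$ starting from $(x,i)$ is \emph{not} $e^{-q_i(x)t}$: conditionally on the path of $X$, the first jump time has hazard rate $q_i(X_s)$ at time $s$, so the zero-jump term must be the Feynman--Kac quantity $\E_x\big[\exp\big(-\int_0^tq_i(X_s^{(i)})\,\d s\big)f_i(X_t^{(i)})\big]$, and likewise the Duhamel term must carry the factor $\exp\big(-\int_0^sq_i(X_u^{(i)})\,\d u\big)$ inside the expectation rather than being the operator composition $P^{(i)}_{t-s}\big(\sum_j q_{ij}(\cdot)P_sf(\cdot,j)\big)$ that you wrote; your displayed identity is correct only for state-independent $q$, which is the setting of Theorem \ref{t-s-2}, not of Theorem \ref{t-strong}. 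This is fixable: the killed semigroup $P_t^{(i),q_i}h(x)=\E_x\big[e^{-\int_0^tq_i(X_s^{(i)})\d s}h(X_t^{(i)})\big]$ is still strong Feller because $q_i$ is bounded (by \eqref{con-1-2}) and Lipschitz (by (A2)) --- for instance by expanding the exponential into iterated integrals of $P^{(i)}$ against the bounded potential $q_i$ --- but this is an extra layer of argument that your plan currently elides, and it is precisely the layer the paper avoids by invoking \cite[Theorem 3.10]{ZY} for the whole truncated coupled system at once.
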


\begin{proof}
  Similar to \cite{ZY}, we shall use the truncated method to prove this theorem. But, as $\S$ is an infinite state space, we also need to construct suitable truncated jump processes to derive the desired result now.  For $K=1,2,\ldots$, let $\tau_K=\inf\{t\geq 0; |X_t|+\La_t\geq K\}$.
  Define the $K$-truncated process $(X_K(t),\La_K(t))$ so that $(X_K(t), \La_K(t))=(X_t,\La_t)$ up to $\tau_K$.

  Let $\phi^K(x)$ be a smooth function with range $[0,1]$ satisfying $\phi^K(x)=1$ for $|x|\leq K$ and $\phi^K(x)=0$ for $|x|\geq K+1$. For $j,k=1,2,\ldots,d$, define
  \[a_{jk}^K(x,i)=a_{jk}(x,i)\phi^K(x),\quad b_j^K(x,i)=b_j(x,i)\phi^K(x).\]
  For $i\in\S$ and $i\leq K+\kappa$, \[q_{ij}^K(x):=q_{ij}(x)\phi^K(x) \ \text{for}\  j\leq K+\kappa;\ \   q_{i(K+\kappa+1)}^K(x):=\sum_{j\geq K+\kappa+1} q_{ij}(x)\phi^K(x).\]
  For $i=K\!+\!\kappa+1$,
  \begin{align*}&q_{ij}^K(x):=1+q_{ij}(x)\phi^K(x)\ \text{for}\ K+1\leq j\leq K+\kappa;\\
   &q_{(K+\kappa+1)(K+\kappa+1)}^K(x):=-\sum_{j=K+1}^{K+\kappa}\big(1+q_{ij}(x)\phi^K(x)\big).
   \end{align*}
  Since $(q_{ij}(x))$ is irreducible for each $x\in\R^d$, it is easy to check that $(q_{ij}^K(x))$ is also an irreducible $(K\!+\!\kappa\!+\!1)\!\times\!(K\!+\!\kappa\!+\!1)$-matrix for each $x\in \R^d$. $(q_{ij}^K(x))$ can be viewed as a $Q$-matrix on the space $\{1,2,\ldots,K\!+\!\kappa\!+\!1\}$, and coincides with $(q_{ij}(x))$ on $\{1,2,\ldots,K\}$.
  For any $g(\cdot,i)\in  C^2(\R^d)$, $i\in\S$, define the operator $\mathscr A^K$ by
  \begin{align*}
    \mathscr A^Kg(x,i)&=\frac 12\sum_{j,k=1}^da_{jk}^K(x,i)\frac{\partial^2}{\partial x_j\partial x_k} g(x,i)+\sum_{j=1}^db_j^K(x,i)\frac{\partial}{\partial x_j}g(x,i)\\
    &\quad +\sum_{j\neq i}q^K_{ij}(x)(g(x,j)-g(x,i)).
  \end{align*}
  Denote by $P_{x,i}^K$ the associated probability measure for $\mathscr A^K$, and $\E_{x,i}^K$ the corresponding expectation. By Theorem  \ref{t-unique}, the strong solution is unique, and hence for any bounded measurable function $f$ on $\R^d\times \S$,
  \[\E_{x,i}[f(X_t,\La_t)\mathbf 1_{\tau_K>t}]=\E_{x,i}^K[f(X_K(t),\La_K(t))\mathbf 1_{\tau_K>t}].\]
  By \cite[Theorem 3.10]{ZY}, $P_{x,i}^K$ has strong Feller property.
  By Proposition \ref{m-est}, we have
  \[K\p_{x,i}(\tau_K\leq t)\leq (\frac43|x|^2+4i^2)\exp\Big((4+\frac43C_1^2)\int_0^tc(s)\d s+8\kappa^2(\alpha^2+2)(t+1)t\Big), \]
  which implies that
  $\p_{x,i}(\tau_K\leq t)\ra 0$ as $K\ra \infty$  uniformly for $(x,i)$ in a compact set of $\R^d\times\S$.
  Therefore, for every fixed $(x,i)\in \R^d\times\S$,
  \begin{align*}
    &|\E_{y,i} f(X_t,\La_t)-\E_{x,i} f(X_t,\La_t)|\\
    &\leq |\E_{y,i} f(X_t,\La_t)-\E_{y,i}^Kf(X_K(t),\La_K(t))|+|\E_{y,i}^K f(X_K(t),\La_K(t))-\E_{x,i}^K f(X_K(t),\La_K(t))|\\
    &\quad +|\E_{x,i}^K f(X_K(t),\La_K(t))-\E_{x,i} f(X_t,\La_t)|\\
    &\leq \|f\|\p_{x,i}(\tau_K\leq t)+\|f\|\p_{y,i}(\tau_K\leq t)+|\E_{y,i}^K f(X_K(t),\La_K(t))-\E_{x,i}^Kf(X_K(t),\La_K(t))|,
  \end{align*}
  where $\|f\|$ denotes the essential supremum norm of $f$. Consequently, the desired strong Feller property follows immediately from the previous inequality.
\end{proof}

\begin{thm}\label{t-s-2}
Let $(X_t,\La_t)$ be a time-homogeneous, state-independent regime-switching diffusion process satisfying (\ref{1.1-h}), (\ref{1.2}). For each $i\in \S$, $(X_t^{(i)})$ is defined by (\ref{env-x}).
Assume that $(X_t,\La_t)$ and all $(X_t^{(i)})$, $i\in\S$, own Feller property.
\begin{itemize}
  \item[$1^\circ$] If for every $i\in\S$, $(X_t^{(i)})$ has strong Feller property, then $(X_t,\La_t)$ also has strong Feller property.
  \item[$2^\circ$] If there exists some $i\in \S$ such that for all $j\in\S$ with $q_{ij}>0$, $(X_t^{(j)})$ has strong Feller property, but  $(X_t^{(i)})$ \textbf{doesn't} have strong Feller property,
      then $(X_t,\La_t)$ \textbf{doesn't} have strong Feller property either.
\end{itemize}
\end{thm}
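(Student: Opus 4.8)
The plan is to use the strong Markov property together with the key observation that the regime-switching process, conditioned on the trajectory of $(\La_t)$, behaves as a time-inhomogeneous diffusion whose coefficients are obtained by switching among the families $b(\cdot,i),\sigma(\cdot,i)$. Since the process is state-independent, $(\La_t)$ is a fixed, autonomous continuous-time Markov chain on $\S$ independent of the driving Brownian motion, and a given path of $(\La_t)$ decomposes $[0,t]$ into finitely many subintervals (with probability one, using (A1) and the conservativeness assumption) on which $X$ evolves according to a single $(X^{(i)})$-dynamics. The first step for part $1^\circ$ is to condition on the first jump time $\eta_1$ of $(\La_t)$: write
\begin{align*}
\E_{x,i}f(X_t,\La_t)&=\E_{x,i}\big[f(X_t,i)\mathbf 1_{\eta_1>t}\big]+\sum_{j\ne i}\E_{x,i}\big[\mathbf 1_{\eta_1\le t,\,\La_{\eta_1}=j}\,P_{t-\eta_1}f(X_{\eta_1},j)\big],
\end{align*}
where $P_s$ is the semigroup of $(X_s,\La_s)$. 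On $\{\eta_1>t\}$ the first term is $\E^{(i)}_x[f(X^{(i)}_t,i)e^{-q_i t}]$ (since $\eta_1$ is exponential with rate $q_i$ given the frozen path), which is continuous in $x$ by the strong Feller property of $(X^{(i)}_t)$. For the second term I would use that $X_{\eta_1}=X^{(i)}_{\eta_1}$, again invoke the strong Feller property of $(X^{(i)})$ together with the Feller property of $(X_t,\La_t)$ (to handle the continuity of $y\mapsto P_{t-s}f(y,j)$ inside the integral), and control the dependence on $x$ by the bounded-convergence / dominated-convergence theorem. This gives continuity of $x\mapsto P_t f(x,i)$; continuity in the discrete variable $i$ is automatic since $\S$ is discrete.

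Making this rigorous requires some care: the natural inductive route is to write $P_tf = \sum_{n\ge 0} (\text{contribution of exactly } n \text{ jumps in } [0,t])$ and show each term is jointly continuous in the starting point and the integration variables, then sum using that the number of jumps in $[0,t]$ has finite exponential moments (a consequence of (\ref{con-1-2}) or (A4), via Proposition \ref{m-est}, giving a uniform tail bound on the jump count over compacts). The $n$-jump term is an iterated integral over jump times $0<s_1<\cdots<s_n<t$ of a product of Gaussian-type transition operators for the $(X^{(i)})$'s composed at the jump instants; strong Feller propagates through such compositions because $P^{(i)}_s$ maps bounded measurable functions to bounded continuous functions and the remaining $P^{(j)}_{s_k-s_{k-1}}$ are Feller, so that the composition is continuous in the initial point for every fixed $(s_1,\dots,s_n)$, and dominated convergence finishes the $x$-continuity after integrating. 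The main obstacle here is bookkeeping — ensuring the continuity survives the infinitely many summands and the integration over jump times — rather than any deep new idea; the exponential moment bound on the jump count is exactly what is needed to dominate the tail uniformly on compacts.

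For part $2^\circ$ I would argue by contradiction. Suppose $(X_t,\La_t)$ were strong Feller. Fix the bad state $i$ and a direction $j$ with $q_{ij}>0$. Starting from $(x,i)$, decompose on the event that the very first jump of $(\La_t)$ occurs in $(t-\veps,t)$ and goes to $j$, versus its complement. On the complement the contribution is (by the reasoning of part $1^\circ$, using that all $(X^{(\ell)})$ reached before time $t-\veps$ with $q_{i\ell}>0$ are strong Feller, plus the Feller property) continuous in $x$; but the leftover term, of the form $\E_{x,i}[\mathbf 1_{\eta_1\in(t-\veps,t)}\,\mathbf 1_{\La_{\eta_1}=j}\,P^{(j)}_{t-\eta_1}g(X^{(i)}_{\eta_1})]$ for suitable bounded $g$, would then also have to be continuous in $x$; letting $\veps\downarrow0$ and rescaling by $1/(q_{ij}\veps)$, one extracts in the limit an expression like $\E^{(i)}_x[g(X^{(i)}_t)]$ up to a continuous correction, forcing $x\mapsto \E^{(i)}_x[g(X^{(i)}_t)]$ to be continuous for all bounded measurable $g$ — i.e., $(X^{(i)}_t)$ would be strong Feller, contradicting the hypothesis. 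The delicate point is isolating a term that is a \emph{clean} multiple of the $(X^{(i)})$-semigroup in the $\veps\to0$ limit; I would choose $f$ on $\R^d\times\S$ supported on the $j$-sheet, $f(\cdot,\ell)=0$ for $\ell\ne j$, with $f(\cdot,j)=g$ a fixed bounded measurable function witnessing the failure of strong Feller for $(X^{(i)})$ composed with $P^{(j)}$ — actually simpler, pick $g$ witnessing failure of strong Feller of $(X^{(i)})$ directly and absorb $P^{(j)}_{0^+}=\mathrm{id}$ in the limit — and track that all omitted terms (multiple jumps in the short window, or first jump landing elsewhere) are $o(\veps)$ uniformly on compacts, again by the exponential jump-count bound. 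This short-time asymptotic extraction is the real crux of $2^\circ$.
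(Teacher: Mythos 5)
Your part $1^\circ$ is essentially the paper's argument, though you make it heavier than it needs to be. Because the switching is state-independent, conditioning on the randomness of $(\La_t)$ alone fixes $\eta$ and $\La_\eta$, and the single first-jump decomposition already suffices: the post-jump contribution is $P^{(i)}_{\eta}g^{(\La_\eta)}_{\eta}(x)$ with $g^{(k)}_s(x)=\E_{x,k}[f(X_{t-s},\La_{t-s})]$ merely bounded and measurable, so one application of the strong Feller property of $(X^{(i)})$ makes it continuous in $x$ for each fixed jump scenario, and dominated convergence over the law of $(\eta,\La_\eta)$ finishes. No induction over the number of jumps is needed, no continuity of $y\mapsto P_{t-s}f(y,j)$ is needed (the smoothing operator sits on the outside), and the ``finite exponential moments of the jump count'' you invoke is not what Proposition \ref{m-est} provides (it bounds second moments of $\sup_{s\le t}\La_s$) and is not required.

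Part $2^\circ$ is where there is a genuine gap. The paper takes $\bar f(x,k)=\tilde f(x)$ for all $k$ and splits on $\{\eta>t_1\}$ versus $\{\eta<t_1\}$: by state-independence the no-jump term factors as $e^{-q_it_1}\,P^{(i)}_{t_1}\tilde f(x)$, a strictly positive constant times the discontinuous function, while the post-jump term is continuous by the part-$1^\circ$ argument applied at the states $j\in\Theta=\{j:q_{ij}>0\}$ (all strong Feller) followed by the mere Feller property of $(X^{(i)})$; the discontinuity of $P_{t_1}\bar f(\cdot,i)$ is then immediate. You instead choose $f$ supported on the $j$-sheet, which annihilates exactly this clean no-jump term, and try to recover $P^{(i)}_tg$ by forcing the first jump into $(t-\veps,t)$, rescaling by $1/(q_{ij}\veps)$, and ``absorbing $P^{(j)}_{0^+}=\mathrm{id}$''. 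That step fails for precisely the functions you must use: $g$ witnesses the failure of strong Feller, so it is bounded measurable but not continuous, and $P^{(j)}_{s}g$ need not converge to $g$ as $s\downarrow0$ in any sense that survives composition with $P^{(i)}_{\eta_1}$ uniformly in the starting point (if $P^{(j)}_{s}(x,\cdot)$ has a density, $\|P^{(j)}_{s}g-g\|_\infty$ can stay bounded away from $0$ for all $s>0$). Even granting pointwise convergence of your rescaled expressions, a pointwise limit of continuous functions need not be continuous, so without the uniform-on-compacts control that you only gesture at, no contradiction is extracted. The repair is simply not to kill the no-jump term: take the test function nonzero on the $i$-sheet and read off the discontinuity there directly.
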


\begin{proof}
  We provide an explicit construction of probability space $(\Omega,\p)$ to make the role played by the state-independence of $(\La_t)$ clear.

  Let $\Omega_1=\{\omega:[0,\infty)\ra \R^d\ \text{continuous};\ \omega(0)=0\}$. Let $\p_1$ be the Wiener measure on $\Omega_1$. Then $w(t,\omega):=\omega(t)$ for $\omega\in\Omega_1$ is a standard Brownian motion $\p_1$.
  Let \[\Omega_2=\big\{\omega=\sum_{i=1}^n\delta_{t_i,u_i};\ n\in \mathbb{N}\cup\{\infty\}, \ (t_i,u_i)\in [0,\infty)\times[0,\infty)\big\}.\]
  There exists a probability measure $\p_2$ on $\Omega_2$ such that $N(\d t,\d u,\omega):=\omega(\d t,\d u)$ is a Poisson random measure with intensity $\d t\times \d u$.
  Let $\Omega=\Omega_1\times\Omega_2$, and $\p=\p_1\times\p_2$. Then under $\p$, for $\omega=(\omega_1,\omega_2)\in \Omega$, $(\omega_1(t))$ is a standard Brownian motion, $\omega_2(\d t,\d u)$ is a Poisson random measure with intensity $\d t\times \d u$. We can rewrite SDE (\ref{1.1-h}), (\ref{1.3}) in the following form:
  \begin{align*}
    \d X_t&=b(X_t,\La_t)\d t+\sigma(X_t,\La_t)\d \omega_1(t),\\
    \d \La_t&=\int_{\R}h(\La_{t-},u)\omega_2(\d t,\d u).
  \end{align*}
  Rewrite (\ref{env-x}) in the form: $\d X_t^{(i)}=b(X_t^{(i)},i)\d t+\sigma(X_t^{(i)},i)\d \omega_1(t)$.

  Let $\eta=\inf\{t>0;\La_t\neq \La_0\}$. By the theory of jump process, if $\La_0=i$, the distribution of $\eta$ is exponential distribution with parameter $q_i=\sum_{j\neq i}q_{ij}$. So $\p(\eta>0)=1$.

 $1^\circ$\  When $\La_0=i$, we know that $(X_t)$ coincides with $(X_t^{(i)})$ up to $\eta$. For $f\in \mathscr B_b(\R^d\!\times\! \S)$, by the strong Markov property, we obtain
\begin{equation}\label{3-3}
\begin{split}
  &P_t f(x,i)\\ %=P_{\eta\we t}(P_{t-\eta\we t} f)(x,i)
  &=\E_{x,i}\big[\mathbf 1_{0<\eta<t}\E_{X_\eta,\La_\eta}\big[f(X_{t-\eta},\La_{t-\eta})\big]\big]
  +\E_{x,i}\big[\mathbf 1_{\eta\geq t}f(X_t,\La_t)\big]\\
  &=\E_{\p}\big[\mathbf 1_{0<\eta<t}\E_{X^{(i)}_\eta,\La_{\eta}}\big[f(X_{t-\eta},\La_{t-\eta})\big]\big]+\E_{\p}\big[\mathbf 1_{\eta\geq t} f(X_t,i)\big]\\
  &=\E_{\p_2}\big[\mathbf 1_{0<\eta<t}\E_{\p_1}\big[\E_{X_\eta^{(i)},\La_\eta}\big[f(X_{t-\eta},\La_{t-\eta})\big]\big]\big]
  +\E_{\p_2}\big[\mathbf 1_{\eta\geq t} \E_{\p_1}f(X_t^{(i)},i)\big]\\
  &=\E_{\p_2}\big[\mathbf 1_{0<\eta<t}P_{\eta}^{(i)}g_{\eta}^{(\La_\eta)}(x)\big]+\E_{\p_2}\big[\mathbf 1_{\eta\geq t}P_t^{(i)}f(\cdot,i)(x)\big],
\end{split}
\end{equation}
where $g_s^{(k)}(x):=\E_{x,k}[f(X_{t-s},\La_{t-s})]$ for $0<s<t$, $x\in \R^d$, $k\in \S$.
Here, $P_t^{(i)}$ denotes the semigroup corresponding to the process $(X_t^{(i)})$, i.e.
$P_t^{(i)} h(x)=\E_x[h(X_t^{(i)})]$ for bounded measurable function $h$ on $\R^d$. And $P_t^{(i)}f(\cdot,i)(x)$ is equal to $\E_{x}[f(X_t^{(i)},i)]$ used in the previous equation.
Clearly, $x\mapsto g_s^{(k)}(x)$ is bounded measurable. Since for each $i\in \S$, $(X_t^{(i)})$ has strong Feller property, then $x\mapsto P_s^{(i)}g_s^{(k)}(x)$ and   $x\mapsto P_t^{(i)}f(\cdot,i)(x)$ are all bounded continuous for every $i,k\in\S$. By the dominated convergence theorem, we obtain that the function
\begin{equation*}
x\mapsto \E_{\p_2}\big[\mathbf 1_{0<\eta<t}P_{\eta}^{(i)}g_{\eta}^{(\La_\eta)}(x)\big]+\E_{\p_2}\big[\mathbf 1_{\eta\geq t}P_t^{(i)}f(\cdot,i)(x)\big]=P_tf(x,i) \end{equation*}
is continuous. Note that in previous argument the strong Feller property of $P_t^{(i)}$ ensures that $x\mapsto P_tf(x,i)$ is continuous for $f\in \mathscr B_b(\R^d\!\times\!\S)$. This property will be used in next step. Therefore, the strong Feller property of $(X_t,\La_t)$ follows from the arbitrariness of $f$, $t$, and $(x,i)$.

$2^\circ$ \  Suppose $X_0=x_1$ and $\La_0=i$, and set $\eta=\inf\{t>0;\La_t\neq \La_0\}$. Since $(X_t^{(i)})$ has no strong Feller property, there exist $t_1>0$, $\tilde f\in \mathscr B_b(\R^d)$ and $x_1\in\R^d$ so that $x\mapsto P_{t_1}^{(i)}\tilde f(x)$ is not continuous at $x_1$.
Define \[\bar f(x,k)=\tilde f(x)\ \text{ for any $x\in\R^d$, $k\in\S$}.\]
Then, by noting $\p_2(\eta=t_1)=0$, we get
\begin{align*}
  &P_{t_1}\bar f(x,i)\\%=P_{\eta\we t_1}P_{t_1-\eta\we t_1} \bar f(x,i)\\
  &=\E_{\p}[\mathbf1_{\eta> t_1}\bar f(X_{t_1},\La_{t_1})]+\E_{\p}[\mathbf 1_{\eta< t_1}\E_{X_{\eta},\La_{\eta}}[\bar f(X_{t_1-\eta},\La_{t_1-\eta})]\\
  &=\E_{\p_2}[\mathbf 1_{\eta>t_1}\E_{\p_1}[\bar f(X_{t_1}^{(i)},i)]]+\E_{\p}\big[\mathbf 1_{\eta<t_1}\E_{X_\eta^{(i)},\La_{\eta}}[\bar f(X_{t_1-\eta},\La_{t_1-\eta})]\big]\\
  &=\E_{\p_2}[\mathbf 1_{\eta>t_1}P_{t_1}^{(i)}\tilde f(x)]+\E_{\p_2}[\mathbf 1_{\eta<t_1}P^{(i)}_{\eta}g_{\eta}^{(\La_\eta)}(x)]\\
  &=:I+I\!I,
\end{align*}
where $g_s^{(k)}(x)=\E_{x,k}[\bar f(X_{t_1-s},\La_{t_1-s})]$ for   $x\in \R^d$, $k\in \S$, and $0<s<t_1$. By the right continuity of the paths of $(\La_t)$, we know that $\La_\eta\in\Theta:=\{j\in\S;\ q_{ij}>0\}$. By the assumption,   $(X_t^{(j)})$ has strong Feller property for $j\in\Theta$. Using the deduction in the previous step, we have $x\mapsto  g_s^{(j)}(x)$ is continuous for each $j\in \Theta$. Hence part $I\!I$ is a continuous function for $x$ due to the Feller property of $(X_t^{(i)})$. However, the first part $I$ is not continuous at point $x_1$. In all, $x\mapsto P_{t_1}\bar f(x,i)$ is not continuous at point $x_1$, which shows that $(X_t,\La_t)$ doesn't have strong Feller property.
\end{proof}

\section{Dimension-free Harnack inequality}
In this section, we shall establish dimension-free Harnack inequalities for state-independent regime-switching diffusion processes by using the coupling method. Then, these inequalities are applied to study the strong Feller property of RSDPs which could be time-inhomogeneous.
Precisely, let $(X_t,\La_t)$ be defined by
\begin{equation}\label{4.1}
\d X_t=b(t,X_t,\La_t)\d t+\sigma(t,X_t,\La_t)\d W_t, \quad X_0=x\in\R^d,
\end{equation}
and $(\La_t)$ is a $Q$-process in $\S=\{1,2,\ldots,N\}$, $2\leq N\leq \infty$ satisfying
\begin{equation}\label{4.2}\p(\La_{t+\delta}=k|\La_t=i)=\left\{\begin{array}{ll} q_{ik}\delta+o(\delta), &i\neq k,\\
                       1+q_{ii}\delta+o(\delta), &i=k,\end{array}\right.
                       \end{equation}
for $\delta>0$ small enough. $(\La_t)$ is independent of $(W_t)$, and $(q_{ij})$ is conservative and irreducible. Associated with $(X_t,\La_t)$, there is a family of Markov operators:
\[P_tf(x,i):=\E_{x,i}[f(X_t,\La_t)],\quad t\geq 0,\ (x,i)\in\R^d\times\S,\ f\in\mathscr B_b(\R^d\times\S).\]

We collect the assumptions used below to establish Harnack inequality.
\begin{itemize}
  \item[$(\mathrm{H1})$]\ There exist $u,\,\tilde u\in\mathscr U$, defined by \eqref{u-class}, with $u'\leq 0$ and increasing functions $C_i(t)$, $\wt C_i(t)\in C([0,\infty);(0,\infty))$, $i\in\S$ such that for all $t\geq 0$ and $x,\,y\in \R^d$, $i\in \S$,
      \begin{align*}
        &\la x-y, b(t,x,i)-b(t,y,i)\raa+\frac 12\|\sigma(t,x,i)-\sigma(t,y,i)\|^2\leq C_i(t)|x-y|^2u(|x-y|^2)\\
        &\|\sigma(t,x,i)-\sigma(t,y,i)\|^2\leq \wt C_i(t)|x-y|^2\tilde u(|x-y|)^2.
      \end{align*}
  \item[$(\mathrm{H2})$]\ There exists a decreasing function $\lambda\in C([0,\infty);(0,\infty))$ such that $|\sigma(t,x,i)y|\geq \lambda(t)|y|$, $t\geq 0$, $x,\,y\in\R^d$, $i\in\S$.
\end{itemize}

\begin{thm}\label{Harnack}
Assume that (H1), (H2) hold and there exists a constant $\alpha>0$ such that $q_i\leq \alpha i$ for every $i\in\S$.
Suppose that
\begin{equation}\label{con-4-1}
\sup_{t\geq 0,i\in\S}\{|b(t,0,i)|+\|\sigma(t,0,i)\|\}<\infty,\end{equation}
and  for each $t>0$,
\begin{equation}\label{con-4-2}
    0<\inf_{i\in \S} C_i(t)\leq\sup_{i\in \S} C_i(t)<\infty.
\end{equation}
\begin{itemize}
  \item[$1^\circ$]\ For any initial point $(x,i)$, the SDE (\ref{4.1}), (\ref{4.2}) has a unique solution, and the solution is non-explosive.
  \item[$2^\circ$]\ If for some constant $\gamma>0$,
  \begin{equation}\label{con-phi}
  \varphi(s):=\int_0^su(r)\,\d r\leq \gamma su(s)^2,\quad s\geq 0,
  \end{equation}
  then for each $T>0$ and strictly positive function $f\in \mathscr B_b(\R^d\times \S)$,
  \begin{equation}\label{H-ineq}
  P_T\log f(y,i)\leq \log P_T f(x,i)+\E\Big[\frac{C_{\La_T}(T)\varphi(|x-y|^2)}
  {\lambda(T)\big(1-\exp(-2C_{\La_T}(T)T/\gamma)\big)}\Big].
  \end{equation}
\end{itemize}
\end{thm}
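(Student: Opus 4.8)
The plan is to establish parts $1^\circ$ and $2^\circ$ via a coupling argument combined with the Girsanov transform. For part $1^\circ$, I would note that conditions (H1), (H2), and $q_i\leq \alpha i$ are tailored to fit the framework of Theorem \ref{t-unique} (with $b(t,0,i)$, $\sigma(t,0,i)$ uniformly bounded giving the linear growth condition (A3), and (H1) supplying (A5)); hence existence and pathwise uniqueness of the strong solution follow, and Proposition \ref{m-est} (a priori $L^2$ moment bound) yields non-explosion. So $1^\circ$ is essentially bookkeeping, verifying the hypotheses of the earlier results hold.

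For part $2^\circ$, the core is a coupling by change of measure. Since $(\La_t)$ is \emph{state-independent}, it can be realized independently of the Brownian noise driving $(X_t)$; I would fix a realization of the $Q$-process $(\La_t)$ and, conditionally on it, construct a coupling $(X_t, Y_t)$ where $X_t$ solves (\ref{4.1}) from $x$ and $Y_t$ solves a modified SDE from $y$, both driven by the same $(\La_t)$ but with $Y_t$ getting an extra drift term of the form $\xi_t \cdot \frac{X_t-Y_t}{|X_t-Y_t|}$ designed to force $Y_T = X_T$ at time $T$. Using (H1) (the monotonicity-type bound with $u$, $u'\leq 0$) together with (H2) (non-degeneracy $\lambda(t)$), I would choose $\xi_t$ so that $|X_t - Y_t|^2$ (or rather $\varphi(|X_t-Y_t|^2)$, exploiting the structure $\varphi' = u$ and the comparison (\ref{con-phi})) decays fast enough to vanish at $T$; the natural choice produces the factor $C_{\La_T}(T)/\big(\lambda(T)(1-\exp(-2C_{\La_T}(T)T/\gamma))\big)$ in the exponent after optimizing. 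Then Girsanov's theorem, applied conditionally on $(\La_t)$, changes the law of the $Y$-process to that of the $X$-process started at $y$; estimating the relative entropy / exponential moment of the Radon--Nikodym density via the Young inequality $P_T \log f(y,i) \leq \log P_T f(x,i) + \E[\text{(density cost)}]$ gives the log-Harnack inequality (\ref{H-ineq}), with the outer expectation $\E[\cdots]$ over the random environment $\La_T$ remaining because $C_{\La_T}(T)$ depends on the (random) terminal state.

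The main obstacle will be the coupling construction itself: one must verify that the coupling SDE for $(X_t,Y_t)$ admits a solution up to the coupling time, that the coupling time does not exceed $T$ almost surely (so $Y_T=X_T$), and — crucially — that the additional drift $\xi_t$ is integrable enough for Girsanov's theorem to apply, i.e. that Novikov's (or Kazamaki's) condition holds for the exponential martingale, uniformly in the environment. The state-dependence of $C_{\La_t}(t)$ complicates this, but the uniform two-sided bound (\ref{con-4-2}) on $\inf_i C_i(t)$ and $\sup_i C_i(t)$ is precisely what rescues integrability: along any fixed path of $(\La_t)$, the coefficient $C_{\La_t}(t)$ stays between two deterministic finite bounds, so the standard single-environment Harnack estimate (in the spirit of \cite{Wang, SWY}) goes through conditionally, and one integrates over $(\La_t)$ at the end. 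A secondary technical point is handling the non-Lipschitz modulus $u$: one works with $\Psi_\veps$-type functions as in the proof of Theorem \ref{t-unique}, or directly with $\varphi$, passing to the limit $\veps\downarrow 0$ at the appropriate stage.
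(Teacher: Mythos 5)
Your proposal is correct in substance and its central idea---fix a realization of the state-independent chain $(\La_t)$, treat the conditional dynamics as a time-inhomogeneous SDE in a frozen random environment, prove the log-Harnack inequality there, and then integrate over the law of the chain---is exactly the mechanism of the paper's proof. The differences are in execution. For $1^\circ$, you route everything through Theorem \ref{t-unique} and Proposition \ref{m-est}; the paper acknowledges this works but instead writes out an interlacing construction (solve the switching-free SDE on each inter-jump interval $[\tau_k,\tau_{k+1})$ and patch at the jump times), which buys nothing extra here beyond emphasizing the role of state-independence. Either way one must, as you do, verify the linear-growth hypothesis (A3) from (H1) with $y=0$ together with \eqref{con-4-1} and $\sup_i C_i(t)<\infty$. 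For $2^\circ$, you propose to rebuild the coupling-by-change-of-measure and the Girsanov/Novikov analysis from scratch, whereas the paper simply applies the already-established log-Harnack inequality of \cite[Theorem 2.1]{SWY} conditionally on $\omega_2$; your extra labor is not wrong, but it reproves a cited result, and all the obstacles you list (solvability of the coupling SDE, coupling before $T$, integrability of the exponential martingale) are discharged there. One step you gloss over and should make explicit: after the conditional inequality $\E_{\p_1}[\log f(Y_T,\La_T)]\leq \log \E_{\p_1}[f(X_T,\La_T)]+(\text{cost})$, passing to $\log P_Tf(x,i)$ requires Jensen's inequality in the form $\E_{\p_2}\log \E_{\p_1}[f(X_T,\La_T)]\leq \log \E_{\p}[f(X_T,\La_T)]$; this is precisely where the independence of $(\La_t)$ from $(W_t)$ is cashed in, and it explains why the environment expectation survives only in the additive cost term of \eqref{H-ineq}.
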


\begin{proof}
  $1^\circ$\ The existence and uniqueness of solution for \eqref{4.1} and \eqref{4.2} can be proved by using Theorem \ref{t-unique}. But here we would like to provide another proof by using the idea of \cite{XY-11} to stress the advantage caused by the state-independence of $(\La_t)$.  In the argument, we use the probability space $(\Omega,\p)=(\Omega_1\times\Omega_2,\p_1\times\p_2)$ constructed in Theorem \ref{t-s-2}. The existence of the Markov chain $(\La_t)$ is well known (cf. \cite{Chen}).
  By the path property of the jump process $(\La_t)$, there exists a finite number of single jumps during any finite interval $[0,T]$ for almost every $\omega=(\omega_1,\omega_2)\in \Omega$. Hence, for $\p_2$-almost every $\omega_2\in \Omega_2$, there exists a finite number $m=m(\omega_2)\in \N$ so that
  \[0=\tau_0<\tau_1<\ldots<\tau_m\leq T,\]
  where $\tau_i=\inf\{t>\tau_{i-1};\La_t\neq \La_{\tau_{i-1}}\}$, $i\geq 1$. Note that by the construction of $(\Omega,\p)$, $\tau_i$ depends only on $\omega_2\in \Omega_2$ for each $i\geq 1$.
  %For $t\in [\tau_{i-1},\tau_i)$, it holds $\La_t=\La_{\tau_{i-1}}$.

  When $\La_0=i$, on the interval $[\tau_0,\tau_1)$, \eqref{4.1} is equivalent to the following SDE,
  \begin{equation}\label{e-4-1}
  \d X_t^{(i)}=b(t, X_t^{(i)},i)\d t+\sigma(t,X_t^{(i)},i)\d W_t,\quad X_0^{(i)}=x,
  \end{equation}
  According to the theory of SDE without switching
  (see, for instance, \cite[Theorem 2.1]{SWY}), \eqref{e-4-1}
  has a unique solution on $[\tau_0,\tau_1)$. So $(X_t,\La_t)=(X_t^{(i)}, i)$ for $t\in [\tau_0,\tau_1)$. Set
  \[(X_t,\La_t)=\begin{cases}
    (X_t^{(i)},i),\ &\tau_0\leq t<\tau_1,\\
    (X_{\tau_1}^{(i)},\La_{\tau_1}),\ &t=\tau_1.
  \end{cases}\]

  Next, on the interval $[\tau_1,\tau_2)$, by the same reason as above, we have $(X_t,\La_t)=(X_t^{(\La_{\tau_1})},\La_{\tau_1})$, where $\big(X_t^{(\La_{\tau_1})}\big)$ is the unique solution of the following SDE
  \begin{equation}\label{e-4-2}
  \d X_t^{(\La_{\tau_1})}=b(t,X_t^{(\La_{\tau_1})},\La_{\tau_1})\d t+\sigma(t,X_t^{(\La_{\tau_1})},\La_{\tau_1})\d W_t,\quad X_0=X_{\tau_1}.
  \end{equation}
  Set
   \[(X_t,\La_t)=\begin{cases}
     (X_t^{(\La_{\tau_1})}, \La_{\tau_1}),\ &\tau_1\leq t<\tau_2,\\
     (X_{\tau_2}^{\La_{\tau_1}},\La_{\tau_2}),\ &t=\tau_2.
   \end{cases}\]
   Repeating this procedure, we see that (\ref{4.1}) has a unique solution on $[0,T]$.

  Since $u$ is decreasing, the first inequality in (H1) with $y=0$ implies that for $|x|\geq 1$
  \begin{align*}&2\la b(t,x,i),x\raa+\|\sigma(t,x,i)\|^2\\
  &\leq 2\la b(t,0,i),x\raa+\|\sigma(t,0,i)\|^2+2\|\sigma(t,0,i)\|\|\sigma(t,x,i)\|+C_i(t)|x|^2u(1).
  \end{align*}
  The second inequality in (H1) with $y=0$ implies that for $|x|\geq 1$
  \begin{align*}
    \|\sigma(t,x,i)\|&\leq \|\sigma(t,0,i)\|+\sum_{k=1}^{[|x|]}\|\sigma(t,\frac{kx}{[|x|]},i)
    -\sigma(t,\frac{(k-1)x}{[|x|]},i)\|\\
    &\leq \|\sigma(t,0,i)\|+2|x|\sqrt{\wt C_i(t) \tilde u(1)},
  \end{align*} where $[|x|]$ denotes the integer part of $|x|$.
  Invoking condition (\ref{con-4-1}), Proposition \ref{m-est} yields that $(X_t,\La_t)$ is non-explosive.

  $2^\circ$\  Assume $(X_t,\La_t)$ starts from $(x,i)$. Let $(Y_t)$ be the unique solution of (\ref{4.1}) with $Y_0=y$.
  Then
  \begin{equation}
    P_T\log f(y,i)=\E_{\p}\log f(Y_T,\La_T)=\E_{\p_2}\big[\E_{\p_1}\big[\log f(Y_T,\La_T)\big]\big].
  \end{equation}
  By the independence of $(\La_t)$ w.r.t. Brownian motion $(W_t)$, for almost every $\omega_2$, we can apply the Harnack inequality (see \cite[Theorem 2.1]{SWY}) to the SDE
  \[\d X_t(\omega_1,\omega_2)=b(t,X_t(\omega_1,\omega_2),\La_t(\omega_2))\d t+\sigma(t,X_t(\omega_1,\omega_2),\La_t(\omega_2))\d \omega_1(t)\]
  to yield that
  \begin{equation}\label{e-4-3}
  \begin{split}
  &\E_{\p_1}\big[\log f(Y_T,\La_T)\big]\\
  &\leq \log\E_{\p_1}\big[f(X_T, \La_T)\big]+\frac{C_{\La_T}(T)
  \varphi(|x-y|^2)}{\lambda(T)\big(1-\exp\big(-2C_{\La_T}(T)T/\gamma\big)\big)}.
  \end{split}
  \end{equation}
  Taking expectation w.r.t. $\p_2$ and using Jensen's inequality, we get
  \begin{align*}
    &P_T\log f(y,i)\\
    &\leq \log\E_{\p}f(X_T,\La_T)+\E_{\p_2}\Big[\frac{C_{\La_T}(T)
  \varphi(|x-y|^2)}{\lambda(T)\big(1-\exp\big(-2C_{\La_T}(T)T/\gamma\big)\big)}\Big]\\
  &\leq \log P_Tf(x,i)+\E\Big[\frac{C_{\La_T}(T)
  \varphi(|x-y|^2)}{\lambda(T)\big(1-\exp\big(-2C_{\La_T}(T)T/\gamma\big)\big)}\Big].
  \end{align*}
  The proof of this theorem is complete.
\end{proof}

\begin{cor}\label{cor}
Under the conditions of Theorem \ref{Harnack}, the process $(X_t,\La_t)$ has strong Feller property.
\end{cor}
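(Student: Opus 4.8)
The plan is to derive the strong Feller property from the log-Harnack inequality \eqref{H-ineq} in a standard way. The key point is that \eqref{H-ineq} controls the ratio of $P_Tf$ at two nearby points $(x,i)$ and $(y,i)$ in the \emph{same} discrete component $i$, which after exponentiation and a limiting argument yields continuity of $x\mapsto P_Tf(x,i)$; since the state space is $\R^d\times\S$ with $\S$ discrete, continuity in $x$ for each fixed $i$ is exactly the assertion that $P_T$ maps $\mathscr B_b(\R^d\times\S)$ into $C_b(\R^d\times\S)$.

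First I would reduce to the case $f>0$ and $f$ bounded, say $1\leq f\leq M$: for general $f\in\mathscr B_b(\R^d\times\S)$ write $f = (f+\|f\|+1) - (\|f\|+1)$, so it suffices to treat strictly positive bounded $f$ (the constant is trivially handled). For such $f$, apply \eqref{H-ineq} with the roles of $x$ and $y$: since $C_{\La_T}(T)$ is bounded above and below by \eqref{con-4-2}, $\lambda(T)>0$, and $s\mapsto s/(1-e^{-cs})$ is bounded on the relevant range of $c = 2C_{\La_T}(T)T/\gamma$, the expectation on the right of \eqref{H-ineq} is bounded by $\Phi(T)\,\varphi(|x-y|^2)$ for a finite constant $\Phi(T)$ independent of $i$. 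Hence
\[
P_T\log f(y,i)\leq \log P_Tf(x,i) + \Phi(T)\,\varphi(|x-y|^2).
\]
Applying this with $f$ replaced by $f/P_Tf(x,i)$ is not needed; instead I would use the elementary consequence of the log-Harnack inequality that it implies, via Jensen ($\log$ is concave so $P_T\log f\le\log P_Tf$ trivially, but the nontrivial direction is the reverse estimate above), the bound $P_Tf(x,i)\le (P_Tf(y,i))\exp(\Phi(T)\varphi(|x-y|^2))$ after exponentiating and using $\exp(P_T\log f)\le P_T\exp(\log f)=P_Tf$ — more precisely, from $P_T\log f(y,i)\le\log P_Tf(x,i)+\Phi(T)\varphi(|x-y|^2)$ and $\log P_Tf(y,i)\le \log(\text{something})$... the cleanest route is: exponentiate to get $\exp\big(P_T\log f(y,i)\big)\le P_Tf(x,i)\exp\big(\Phi(T)\varphi(|x-y|^2)\big)$, and by Jensen's inequality $P_Tf(y,i)\ge \exp(P_T\log f(y,i))$ is the wrong direction, so instead apply \eqref{H-ineq} to $1/f$ in place of $f$, giving $P_T\log(1/f)(y,i)\le\log P_T(1/f)(x,i)+\Phi(T)\varphi(|x-y|^2)$, i.e. $-P_T\log f(y,i)\le \log P_T(1/f)(x,i)+\Phi(T)\varphi(|x-y|^2)$; combining the two inequalities (one for $f$, one for $1/f$, and swapping $x\leftrightarrow y$) sandwiches $P_T\log f$ and yields $|P_T\log f(x,i)-P_T\log f(y,i)|\le 2\Phi(T)\varphi(|x-y|^2)+o(1)$ as the Harnack constant degenerates — but the standard and simplest deduction is just: \eqref{H-ineq} applied to $f$ and with $x,y$ interchanged gives two-sided control once one also invokes the Feller property.

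To make this fully rigorous and avoid the circular-looking manipulation, the cleanest argument I would actually write is: fix $(x_0,i)$ and a sequence $x_n\to x_0$. Since $\varphi$ is continuous with $\varphi(0)=0$, we have $\varphi(|x_n-x_0|^2)\to 0$. From \eqref{H-ineq} with $(y,i)=(x_n,i)$ and base point $(x_0,i)$, and also with the base point and target swapped, we obtain
\[
P_T\log f(x_n,i)\le \log P_Tf(x_0,i)+\Phi(T)\varphi(|x_n-x_0|^2),\qquad
P_T\log f(x_0,i)\le \log P_Tf(x_n,i)+\Phi(T)\varphi(|x_n-x_0|^2).
\]
Applying these with $f$ and with $1/f$ and combining, $\limsup_{n}|P_Tf(x_n,i)-P_Tf(x_0,i)|$ can be shown to vanish provided one already knows $x\mapsto P_Tf(x,i)$ is, say, lower semicontinuous or has some regularity — and here the non-explosiveness from part $1^\circ$ together with the continuity of the coefficients gives the Feller property of $(X_t,\La_t)$, so $x\mapsto P_Tf(x,i)$ is continuous for $f\in C_b$; a monotone-class / approximation argument then upgrades this to all bounded measurable $f$ using the Harnack bound. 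The main obstacle, and the only genuinely delicate point, is handling the expectation $\E[\cdots]$ over $\La_T$ in \eqref{H-ineq}: one must check that $\sup_{i\in\S}C_i(T)<\infty$ and $\inf_{i\in\S}C_i(T)>0$ (which is exactly \eqref{con-4-2}) guarantee the integrand is bounded by a constant times $\varphi(|x-y|^2)$ uniformly in the random index $\La_T$, so that the right-hand side genuinely tends to $0$ as $y\to x$; everything else is the routine passage from a log-Harnack inequality to the strong Feller property.
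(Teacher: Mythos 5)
Your overall strategy --- deduce strong Feller from \eqref{H-ineq} after checking via \eqref{con-4-2} that the expectation on its right-hand side is bounded by $\Phi(T)\,\varphi(|x-y|^2)$ uniformly in the random index $\La_T$, hence tends to $0$ as $y\to x$ --- is the right one, and that uniformity check is indeed worth making explicit. But the core step, passing from the logarithmic inequality to continuity of $x\mapsto P_Tf(x,i)$ for \emph{bounded measurable} $f$, is never actually carried out. Every route you try (exponentiating, Jensen, replacing $f$ by $1/f$, swapping $x$ and $y$) founders on the obstruction you yourself notice: Jensen gives $P_T\log f\le \log P_Tf$, so \eqref{H-ineq} bounds $\log P_Tf(x,i)$ from below only by $P_T\log f(y,i)$, which is in general strictly smaller than $\log P_Tf(y,i)$, and the sandwich does not close. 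Your fallback --- first prove the Feller property, then ``upgrade'' by a monotone-class argument --- has two genuine gaps: the Feller property is not a hypothesis of Corollary \ref{cor} and does not follow merely from non-explosiveness and continuity of the coefficients under the non-Lipschitz condition (H1); and the class of bounded measurable $f$ with $P_Tf(\cdot,i)$ continuous is not closed under bounded pointwise limits, so a monotone-class argument needs a uniform estimate that you never extract from \eqref{H-ineq}.

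The missing idea is a linearization. The paper applies \eqref{H-ineq} to $1+\veps f$ for small $\veps>0$ and Taylor-expands $\log(1+\veps f)=\veps f+o(\veps)$ on both sides, giving
\[
\veps P_Tf(y,i)+o(\veps)\le \veps P_Tf(x,i)+o(\veps)+\E\Big[\frac{C_{\La_T}(T)\varphi(|x-y|^2)}{\lambda(T)\big(1-\exp(-2C_{\La_T}(T)T/\gamma)\big)}\Big].
\]
For $f$ of the form $1+\veps g$ the Jensen gap between $P_T\log f$ and $\log P_Tf$ is itself $o(\veps)$, which is exactly what makes the sandwich close: letting $y\to x$ kills the Harnack term, and dividing by $\veps$ and letting $\veps\downarrow 0$ yields $\limsup_{y\to x}P_Tf(y,i)\le P_Tf(x,i)$; interchanging the roles of $x$ and $y$ gives the matching $\liminf$ bound. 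This works directly for every positive bounded measurable $f$, with no appeal to the Feller property and no monotone class. (An alternative correct route you could have taken: \eqref{H-ineq}, holding for all such $f$, is equivalent to an entropy bound on $P_T((y,i),\cdot)$ relative to $P_T((x,i),\cdot)$, and Pinsker's inequality then gives convergence in total variation as $y\to x$, which is stronger than strong Feller.)
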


\begin{proof}
  This is a standard application of Harnack inequality (\ref{H-ineq}) (cf. \cite{Wang2}). For the convenience of the reader, we provide the proof. Let $f\in \mathscr B_b(\R^d\times\S)$ be  positive. Applying Harnack inequality (\ref{H-ineq}) to $1+\veps f$ in place of $f$ for $\veps>0$, we obtain
  \[P_T\log(1+\veps f)(y,i)\leq \log P_T(1+\veps f)(x,i)+\E\Big[\frac{C_{\La_T}(T)\varphi(|x-y|^2)}
  {\lambda(T)\big(1-\exp(-2C_{\La_T}(T)T/\gamma)\big)}\Big].\]
  By a Taylor expansion, this yields
  \begin{equation}\label{ine-4-1}
  \begin{split}
    &\log(1)+\veps P_Tf(y,i)+o(\veps)\\
    &\leq \veps P_Tf(x,i)+o(\veps)+\E\Big[\frac{C_{\La_T}(T)\varphi(|x-y|^2)}
  {\lambda(T)\big(1-\exp(-2C_{\La_T}(T)T/\gamma)\big)}\Big]
  \end{split}
  \end{equation}
  Letting $y\ra x$, we get
  \[\veps\limsup_{y\ra x}P_Tf(y,i)\leq \veps P_Tf(x,i)+o(\veps).\]
  Thus, \[P_Tf(x,i)\geq \limsup_{y\ra x}P_Tf(y,i)\quad \forall\, x\in \R^d,\, i\in\S.\]
  On the other hand, letting $x\ra y$ in (\ref{ine-4-1}), we get
  \[P_Tf(y,i)\leq \liminf_{x\ra y}P_Tf(x,i)\quad \forall\,y\in \R^d,\, i\in \S.\]
  Therefore, $x\mapsto P_Tf(x,i)$ is continuous, moreover, $(x,i)\mapsto P_Tf(x,i)$ is continuous due to the discrete topology of $\S$.
\end{proof}

\begin{rem}
  Compared with Theorem \ref{t-strong}, Corollary \ref{cor} can deal with the strong Feller property of time-inhomogeneous RSDPs. Due to the difficulty in the construction of successful coupling processes for state-dependent RSDPs, we can establish the Harnack inequality and further prove the strong Feller property for state-independent RSDPs at present stage. The establishing of Harnack inequalities for state-dependent RSDPs is also important and left open.
\end{rem}
\noindent\textbf{Acknowledgment:} The author is very grateful to the referee for his valuable comments which improves the quality of this work. And he would like to thank professor Jianhai, Bao for his useful discussion.

\end{document}